\DeclareMathOperator{\diam}{diam}
\DeclareMathOperator{\rad}{rad}
\DeclareMathOperator{\dist}{dist}
\newtheorem{theorem}{Theorem}
\newtheorem{definition}[theorem]{Definition}
\newtheorem{proposition}[theorem]{Proposition}
\newtheorem{corollary}[theorem]{Corollary}
\newtheorem{lemma}[theorem]{Lemma}
\theoremstyle{remark}
\newtheorem{remark}[theorem]{Remark}
\newcommand{\R}{\mathbb{R}}
\def\N{{\mathbb N}}
\def\moverlay{\mathpalette\mov@rlay}
\def\mov@rlay#1#2{\leavevmode\vtop{%
   \baselineskip\z@skip \lineskiplimit-\maxdimen
   \ialign{\hfil$\m@th#1##$\hfil\cr#2\crcr}}}
\newcommand{\charfusion}[3][\mathord]{
    #1{\ifx#1\mathop\vphantom{#2}\fi
        \mathpalette\mov@rlay{#2\cr#3}
      }
    \ifx#1\mathop\expandafter\displaylimits\fi}
\begin{document}
\thispagestyle{empty}

\title{Thickness and a gap lemma in $\mathbb{R}^d$}

\author{Alexia Yavicoli}
\address{Department of Mathematics, the University of British Columbia. 1984 Mathematics Road, Vancouver BC V6T 1Z2, Canada}
\email{yavicoli@math.ubc.ca, alexia.yavicoli@gmail.com}

\keywords{Thickness, Gap Lemma, Cantor sets, intersections, patterns,  Hausdorff dimension}
\subjclass{Primary: 28A80, Secondary: 11B25, 28A78}
\thanks{Part of this work was completed while I was visiting the Hausdorff Research Institute for Mathematics (HIM). I am grateful to HIM for financial support and a productive work environment.}

\begin{abstract}
We give a definition of thickness in $\R^d$ that is useful even for totally disconnected sets, and prove a Gap Lemma type result.
We also guarantee an interval of distances in any direction in thick compact sets, relate thick sets (for this definition of thickness) with winning sets, give a lower bound for the Hausdorff dimension of the intersection of countably many of them, a result guaranteeing the presence of large patterns, and lower bounds for the Hausdorff dimension of a set in relationship with its thickness.
\end{abstract}

\maketitle


\section{Introduction and main results}

\subsection{Historical background}

In 1970, S.~Newhouse \cite{Newhouse} introduced the notion of thickness for Cantor sets in the real line, motivated by the study of homoclinic bifurcations in dynamical systems, where it is important to know whether two Cantor sets intersect robustly.  Note that any compact set $C$ in the real line can be constructed by starting with a closed interval and removing a sequence of disjoint open intervals (called gaps) in order of decreasing length. Each gap $G_n$ is removed from a closed interval $I_n$, leaving in the next step of construction two closed intervals $L_n$ and $R_n$: the left and right pieces of $I_n \setminus G_n$. Newhouse defined the thickness of $C \subset \R$ as
\[
\tau_{\mathcal{N}} (C):= \inf_{n} \frac{\min \{ |L_n| , |R_n| \}}{|G_n|},
\]
where we denote by $|I|$ the length of the interval $I$.

Newhouse's crucial Gap Lemma \cite{Newhouse} states that given two compact sets $C^1, C^2 \subseteq \R$, such that neither set lies in a gap of the other and $\tau_{\mathcal{N}}(C^1) \tau_{\mathcal{N}}(C^2) > 1$, then
\[
C^1 \cap C^2 \neq \emptyset.
\]
Among dynamically defined Cantor sets, thickness is a continuous function of the dynamics generating the set, so that this intersection is in fact robust in the defining dynamics.

Thickness, the Gap Lemma and its generalizations turned out to be a useful notion also beyond dynamics. The structure of the intersection of thick sets was studied in
\cite{HKY, Williams}. S.~Astels \cite{Astels} investigated the thickness of sumsets, applying these results to sets arising in diophantine approximation. Recent developments around thickness include applications to problems in geometric measure theory  \cite{ST20, MT21}, number theory \cite{Y20} and the existence of patterns in fractals \cite{MT21, Y21}. However, the definition of thickness depends crucially on the order structure of the real numbers and so it remains a challenging problem to extend it in a satisfactory way to higher dimensions. The goal of this paper is to introduce a notion of thickness in arbitrary dimension that has some of the desirable properties of the one dimensional one, it reduces to Newhouse thickness in dimension $1$, and has a corresponding Gap Lemma.

Before introducing our definition, we comment on the notions of thickness in higher dimensions that have been proposed. S.~Biebler \cite{Biebler} introduced a notion of thickness for dynamically defined subsets of the complex plane, and proved a corresponding gap lemma. The main drawback is that it is restricted to this special class of sets in the plane, and even then the gap lemma is satisfied only for some ``densely packed'' examples. D-J.~Feng and Y-F.~Wu \cite{DJF} defined a notion of thickness for general subsets of $\R^d$. However, they were motivated by a problem on iterated sumsets (that S.~Astels \cite{Astels} solved in the line using Newhouse thickness) rather than analogs of the Gap Lemma. Together with K. Falconer \cite{FalconerYavicoli}, we introduced a notion of thickness in $\mathbb{R}^d$ that agrees with Newhouse's definition in the line and, among other results, proved a Gap Lemma for it. However, our definition is based on the components of the complement of the set, and so in dimension $d\ge 2$, totally disconnected sets have thickness zero (and so none of our results apply to them).

While the concept of thickness in this paper is partly inspired by Biebler's, it holds for general compact sets in any dimension, and even in Biebler's settings our Gap Lemma is more general and allows for less densely packed examples. Unlike the thickness from \cite{FalconerYavicoli}, our notion covers many totally disconnected sets. And unlike  \cite{DJF}, our motivation is to provide a Gap Lemma that allows to prove that many pairs of compact sets intersect robustly.

\subsection{Definition of thickness and main results}

From now on, we will work on $\R^d$ equipped with a distance $\dist$ associated to a norm, and all balls will be closed. The radius of a ball $B$ is denoted by $\rad(B)$. We denote by $aB$ the ball with the same center as $B$ and radius $a\rad(B)$.
The most natural norm to consider for us is the $\ell^\infty$ norm $\|(x_1,\ldots,x_d)\|_\infty = \max_i |x_i|$ due to the tiling properties of cubes, but for the most part we work in this greater generality.

Given a word $I$ (a finite sequence of natural numbers) we denote by $\ell(I) \in \N_0$ the length of $I$. We consider any compact sets that can be written as
\[
C=\bigcap_{n \in \N_0} \bigcup_{\ell(I)=n}S_I
,\]
where
\begin{itemize}
\item Each $S_I$ is a ball (in the distance $\dist$) and contains $\{ S_{I,j} \}_{1 \leq j \leq k_I}$. No assumptions are made on the separation of the $S_{I,j}$.
\item For every infinite word $i_1, i_2, \cdots$ of indices of the construction, \[\lim_{n \to +\infty} \rad S_{i_1,i_2, \cdots, i_n}=0.\]
\end{itemize}
In this case we say that $\{S_I\}_I$ is a \emph{system of balls} for $C$. Note that any compact set arises from a system of balls (for example, the closed dyadic cubes intersecting the set). In fact, there are multiple systems of balls for a given set (a system of balls is roughly equivalent to a \emph{derivation} in the one-dimensional case).

Let us fix a system of balls. The main issue with extending the notion of thickness to Cantor sets in $\R^d$ is that there isn't a suitable notion of ``gap''. We use the following notion as a substitute:
\[
h_I:= \max_{x \in S_I} \dist (x, C).
\]
Observe that if $z \in S_I$ and $r \in [h_I, +\infty)$, then the closed ball with center $z$ and radius $r$ intersects $C$.
On the other hand, for every $r <h_I$ there is a $z \in S_I$ so that the closed cube with center $z$ and radius $r$ does not intersect $C$.

We remark that $\dist(x,C)$ in the definition of $h_I$ may be realized by a point in $C\setminus S_I$. However, it is easy to see that if $x\in S_I$, then $\dist(x, C\cap S_I)\le 2 h_I$: apply the definition of $h_I$ to a point $y$ such that $\dist(x,y)\le h_I$ and $\dist(y,\partial S_I)\ge h_I$.

\begin{definition}[Thickness of $C$ associated to the system of balls $\{S_I\}_I$]
\[\tau(C, \{S_I\}_I):= \inf_{n \in \N_0} \inf_{\ell(I)=n} \frac{\min_i \rad(S_{I,i})}{h_I}.\]
\end{definition}

In order to state our gap lemma, we need an additional definition; it is related to Biebler's notion of ``well-balanced'' \cite{Biebler}.
\begin{definition}
Given $\{S_I\}_I$ a system of balls for the compact set $C$, we say that $\{S_I\}_I$ is $r$-uniformly dense if for every $I$, for every ball $B \subseteq S_I$ with $\rad (B)\geq r\rad (S_I)$, there is a child $S_{I,i} \subseteq B$
\end{definition}

\begin{theorem}[Gap Lemma]\label{Theo:GapLemma}
Let $C^1$ and $C^2$ be two compact sets in $(\R^d, \dist)$, generated by systems of balls $\{S_I^1\}_I$ and $\{S_L^2\}_L$ respectively, and fix $r \in (0, \frac{1}{2})$. Assume:
\begin{enumerate}[\rm(i)]
\item\label{hypTau} $\tau(C^1, \{S_I^1\}_I) \tau(C^2, \{S_L^2\}_L)\ge \frac{1}{(1-2r)^2}$,
\item\label{hyp0} $C^1 \cap (1-2r) S_{\emptyset}^2 \neq \emptyset$,
\item\label{hyp0b} $\rad (S_{\emptyset}^1) \geq r \rad (S_{\emptyset}^2)$ and $\rad (S_{\emptyset}^2) \geq r \rad (S_{\emptyset}^1)$,
\item\label{hypUnif} $\{S_I^1\}_I$ and $\{S_L^2\}_L$ are $r$-uniformly dense.
\end{enumerate}
Then, $C^1 \cap C^2 \neq \emptyset$.
\end{theorem}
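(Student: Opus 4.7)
The plan is to construct inductively a sequence of pairs $(I_n, L_n)$ of finite words (with $I_0 = L_0 = \emptyset$, each $I_n, L_n$ a prefix of $I_{n+1}, L_{n+1}$, and at least one of them strictly extended at each step), together with balls $A_n := S^1_{I_n}$, $B_n := S^2_{L_n}$ and points $x_n \in C^1$, satisfying at every stage: (a) $r\rad(B_n) \leq \rad(A_n) \leq \rad(B_n)/r$; and (b) $x_n \in C^1 \cap (1-2r) B_n$. Once this construction is in place, both words grow unboundedly---invariant (a) forces the smaller-radius side to catch up with the other---so both $\rad(A_n)$ and $\rad(B_n)$ tend to zero, by the assumption that radii shrink along infinite paths of a system of balls. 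By compactness, $\{x_n\}$ has a subsequential limit $y$, and then $y \in C^1$ (closedness) and $y \in \bigcap_n B_n \subseteq C^2$ (by (b) combined with vanishing radii), so $C^1 \cap C^2 \neq \emptyset$.

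The base case is precisely hypotheses \eqref{hyp0} and \eqref{hyp0b}. For the inductive step, say for definiteness that $\rad(B_n) \geq \rad(A_n)$; the opposite case is handled symmetrically, after converting the linking $C^1$-point to a linking $C^2$-point using the thickness and uniform density of the other side. Because $x_n \in (1-2r) B_n$, the ball $B(x_n, r\rad(B_n))$ fits inside $B_n$; by the $r$-uniform density hypothesis \eqref{hypUnif}, there is a child $B_{n+1} = S^2_{L_n, j}$ of $B_n$ contained inside this ball. Consequently $\dist(x_n, \mathrm{center}(B_{n+1})) \leq r\rad(B_n)$, and the thickness of $C^2$ forces $\rad(B_{n+1}) \geq \tau_2 h^2_{L_n}$.

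Producing the new linking point $x_{n+1}$, together with a refinement $I_{n+1}$ preserving (a), is the main technical step. The idea is to descend the $C^1$-tree along the path of $x_n$ to a cell $A_{n+1} = S^1_{I_{n+1}}$ whose radius is comparable to $\rad(B_{n+1})$, and then use the thickness of $C^1$ to find a point of $C^1$ inside $A_{n+1}$ that also lies in the $(1-2r)$-shrinkage of the new, smaller $B_{n+1}$. Indeed, thickness tells us that every point of $A_{n+1}$ is within $h^1_{I_{n+1}} \leq \rad(A_{n+1})/\tau_1$ of $C^1$; combining this with the bound $\rad(B_{n+1}) \geq \tau_2 h^2_{L_n}$, the radius comparability of $A_n, B_n$, and the thickness-product hypothesis \eqref{hypTau} $\tau_1 \tau_2 \geq (1-2r)^{-2}$, one extracts a $C^1$-point inside $(1-2r) B_{n+1}$ near the center of $B_{n+1}$.

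The main obstacle is the simultaneous preservation of (a) and (b) across the descent, and in particular the need to locate $x_{n+1}$ inside the $(1-2r)$-shrinkage of the typically much smaller $B_{n+1}$: the old $x_n$ is in general not close enough to $\mathrm{center}(B_{n+1})$ to serve again, so a genuinely new point of $C^1$ is needed, and this is where both thicknesses $\tau_1, \tau_2$ must be deployed at once. The factor $(1-2r)^{-2}$ in the thickness product tracks exactly the two consecutive margins lost in the descent: one in fitting the ball $B(x_n, r\rad(B_n))$ around $x_n$ inside $B_n$, and a second in placing the new linking point $x_{n+1}$ inside the $(1-2r)$-shrinkage of $B_{n+1}$.
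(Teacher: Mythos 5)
The proposal is in the right spirit---an inductive construction of a sequence of points in one set lying in ever smaller shrunken chunks of the other---but the key inductive step has a genuine geometric gap. You locate the child $B_{n+1}$ of $B_n$ anywhere inside the ball $B(x_n, r\rad(B_n))$, whose radius is $r\rad(B_n)$, and then descend the $C^1$-tree along $x_n$ to a cell $A_{n+1}$ whose radius is ``comparable to $\rad(B_{n+1})$''. But $\rad(B_{n+1})$ can be arbitrarily small compared to $r\rad(B_n)$ (a child has radius at most $r\rad(B_n)$ by $r$-density, and can be much smaller if $h^2_{L_n}$ is tiny), while the center of $B_{n+1}$ can sit at distance nearly $r\rad(B_n)$ from $x_n$. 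In that case $A_{n+1}$, being a small ball around $x_n$, does not contain the center of $B_{n+1}$ and may not meet $B_{n+1}$ at all. The statement ``every point of $A_{n+1}$ is within $h^1_{I_{n+1}}$ of $C^1$'' is true but useless unless the center of $B_{n+1}$ is \emph{in} $A_{n+1}$, which the construction does not deliver. So the claimed $C^1$-point in $(1-2r)B_{n+1}$ does not materialize.

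The paper avoids this by reversing the order: it first descends along $x_n$'s path in $C^1$ to the \emph{largest} cell $S^1_k$ with $(1-2r)\rad(S^1_k)\ge h^2_L$ (a cell of radius at least $r\rad(S^2_L)$ in Case~1, not at all comparable to the much smaller new $C^2$-child), and only then produces a ball $B$ of radius $r\rad(S^2_L)$ sitting \emph{inside the intersection} $S^1_k\cap S^2_L$---not centered at $\alpha_n$---via a short geometric argument with the line through the two centers. Density gives the $C^2$-child inside $B\subseteq S^1_k$, so the containment needed for the thickness estimate is automatic, and the sharpened bound $h_{S^1_k}<(1-2r)\min_\ell\rad(S^2_{L,\ell})$ (not merely $h_{S^1_k}\le\rad(S^1_k)/\tau_1$) produces the new linking point. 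When instead $\rad(S^1_k)<r\rad(S^2_L)$ (Case~2), no ball of the required radius fits; the roles of the sets must swap, and one uses thickness of $C^2$ applied to $(1-2r)S^1_k$ directly. Your sketch glosses over this swap (``converting the linking $C^1$-point to a linking $C^2$-point'') and also keeps an invariant---radii of $A_n$ and $B_n$ comparable within a factor $r$---that the correct step does not maintain and that the paper's scheme (track a single shrinking chunk, with a point of the \emph{other} set in its $(1-2r)$-shrinkage, and allow the tracked set to alternate) deliberately avoids. To repair the argument you would need to (1) decouple the scale of the $C^1$-cell from the scale of the new $C^2$-child, (2) position the search ball so that the child lands inside the $C^1$-cell, and (3) handle the case where the $C^1$-cell is too small by swapping which set plays which role---at which point you essentially reconstruct the paper's proof.
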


We make some remarks on this statement.
\begin{enumerate}
\item Unlike Newhouses's Gap Lemma, we need the additional ``uniform denseness'' assumption. It is necessary to deal with systems of balls giving ``artificially large'' thickness. For example, the point $\{0\}$ can be obtained from the system of chunks $\{[-a_n, a_n]^d\}$ where $a_n$ is any sequence decreasing to $0$ (here each ball has a unique child). It is easy to check that the thickness can be arbitrarily close to $1$ if $a_n$ has slow decay. On the other hand, one can easily construct sets $C$ with arbitrarily large thickness with the same initial set $S_\emptyset = [-a_0,a_0]^d$, and such that $0\notin C$. Of course, the system of chunks for $\{0\}$ is not $\tfrac{1}{2}$-dense, so this does not contradict Theorem \ref{Theo:GapLemma}.
\item There is a balance between the thickness and the denseness assumptions: it is enough that the product of the thicknesses is $>1+c$, at the cost of requiring the sets to be dense in a way that depends on $c$. The numerical relation between both parameters is unlikely to be sharp.
\item The second and third conditions are quite mild; they can be seen as a more quantitative version of the hypothesis that each Cantor set is not contained in a gap of the other in the original Gap Lemma.
\item All the hypotheses are robust under various types of perturbations of the Cantor sets. This is discussed in more detail in Section \ref{sec:examples}.
\end{enumerate}

\section{Basic properties and examples}
\label{sec:basic}

In this section we discuss some basic properties of our notion of thickness and provide the first examples of its calculation. We show that it coincides with Newhouse thickness in the line, compare it to Biebler's thickness in the plane, and discuss its relationship to Hausdorff dimension.

\subsection{The one-dimensional case}
\label{subsec:one-dim}

In the line, our notion coincides with Newhouse's thickness using the most natural system of balls. Note that the choice of (norm-based) distance plays no role in the real line as balls are always closed intervals whose length is proportional to the radius.

Given a compact set $C \subseteq \R$, let $(G_n)_n$ be its sequence of gaps, in order of decreasing diameter, with corresponding left and right intervals $L_n, R_n$. Let us consider the system of balls defined as follows: the root is the convex hull of $C$. We remove $G_1$ from it to obtain the children $L_1, R_1$. We continue inductively: for each interval of step $k$, we remove the first gap $G_m$ in the sequence that is contained in it, to obtain the two children $L_n, R_n$.

By construction, each interval has exactly 2 children. If we denote by $S_I$ the interval from which $G_n$ is removed, then $S_I=L_n \cup G_n \cup R_n$, and
\[
\frac{\min \{|L_n|, |R_n|\}}{|G_n|}= \frac{\min_i \diam (S_{Ii})}{2 h_I}=\frac{\min_i \rad (S_{Ii})}{h_I}.
\]
Since every gap is removed at some point, we conclude that
\[
\tau_{\mathcal{N}}(C)=\tau(C, \{S_{I}\}_I).
\]

\subsection{A class of self-similar examples}
\label{subsec:simple-example}

As a first class of examples, we consider equally-spaced self-similar sets in $\R^d$. More general self-similar sets are studied in Section \ref{sec:examples}.

Fix $n \ge 2$ and $\ell \in (0, \frac{2}{n})$. We consider a Cantor set $C = C_{\ell,n} \subseteq (\R^d, \dist_{\infty})$ defined as follows:
We start with $S_\emptyset =[-1,1]^d=B[0,1]$. We take $g=\frac{2-n\ell}{n-1}$, so that $n\ell +(n-1)g=2$.
We construct the set inductively, replacing each cube from the previous step by $n^d$ equidistant cubes of relative side-length $\frac{\ell}{2}$ (in such a way that the corners of cubes in each step remain corners of cubes in the next step).  This set is the attractor of a particular type of IFS with $n^d$ homothetic functions with contraction factor $\frac{\ell}{2}$, so that the cubes are equidistant. See Figure \ref{fig:selfsim}. We consider the most natural system of balls for this example, given by the cubes in the construction.

\begin{figure}
  \centering
  \includegraphics[width=0.6\textwidth]{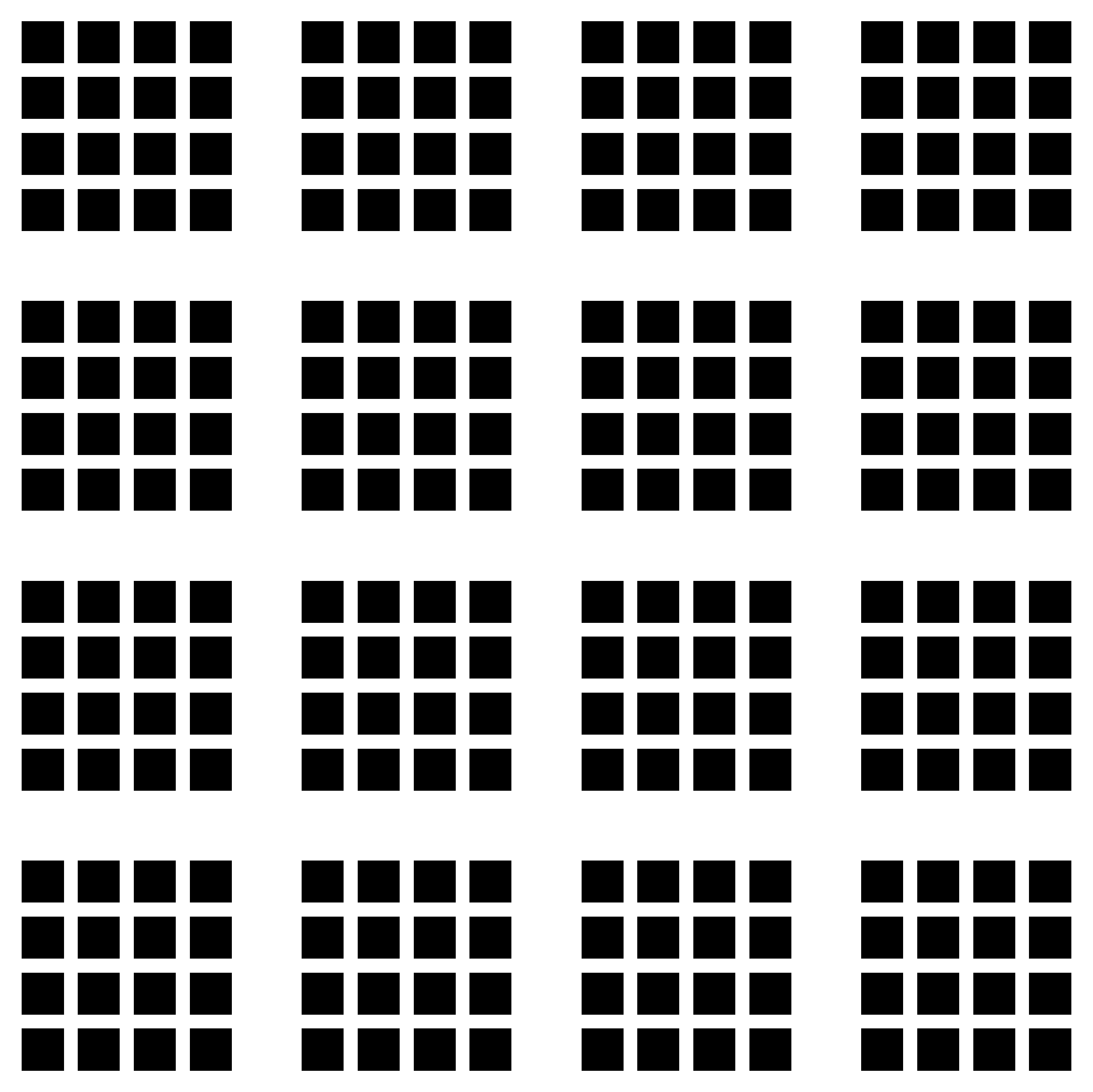}
  \caption{The second step of construction of a self-similar set with $n=4$, $\ell=2/5$ and $\tau=3$}\label{fig:selfsim}
\end{figure}

We denote any cube of step $k$ by $S_k$. By construction, cubes of the same level have the same radius and  $\rad (S_k)=\frac{\ell}{2}\rad(S_{k-1})$. We claim that
\[
h_{S_{k-1}}=\frac{g}{2}\rad(S_{k-1}).
\]
Indeed, let's consider $x \in S_{k-1}$.
If $x \notin C$ does not belong to any cube of step $k$ of the construction, then $B_{\infty}[x, \frac{g}{2}\rad(S_{k-1})]$ intersects $C \cap \partial Q$, where $Q$ is the closest cube of step $k$ to $x$.
In case $x \notin C$ is contained in a cube $S_k$ of step $k$, there is $n \geq k$ so that $x \in S_{n-1}$ but $x$ does not belong to any cube of level $n$. As before, $B_{\infty}[x, \frac{g}{2} \rad{S_{n-1}}]$ intersects $C \cap \partial Q$ where $Q$ is a cube of level $n$ contained in $S_{n-1}$. Then, the distance from $C$ to $x$ is at most $\frac{g}{2}\rad(S_{n-1}) \leq \frac{g}{2}\rad(S_{k-1})$.
So, $h_{S_k}\leq \frac{g}{2}\rad(S_{k-1})$.
And $h_{S_k}\geq \frac{g}{2}\rad(S_{k-1})$ can be seen taking $2^d$ sub-cubes of step $k$ of the construction so that the projection to each axis is formed by two consecutive intervals, and consider the midpoint of all of them in the definition of $h_{S_k}$.

It follows from the claim that the thickness is given by
\begin{equation} \label{eq:tau-C}
\tau(C) =\frac{\ell}{g}=\frac{\ell (n-1)}{2-n\ell}.
\end{equation}

One can also see that it is $r:=\frac{1}{2}\left( 2 \ell + \frac{2-n\ell}{n-1}\right)= \ell +\frac{2-n\ell}{2(n-1)}$- uniformly dense. This holds since in each direction ($\R$) any interval of length $2r=2\ell +\tilde{r}$ contained in $[-1,1]$, must contain at least one of the $\tilde{r}$-equispaced intervals of length $\ell$.

This class provides our first non-trivial examples to which Theorem \ref{Theo:GapLemma} applies. Note that $C_{l_1,n_1}$ and $C_{l_2,n_2}$ always intersect at the corners of $[-1,1]^d$. But Theorem \ref{Theo:GapLemma} guarantees that, under easily checkable assumptions on $l_i$ and $n_i$, the intersection of $C_{l_1,n_1}$ and a small translate of $C_{l_2,n_2}$ is nonempty - and hence $C_{l_1,n_1}-C_{l_2,n_2}$ contains an interval. In Section \ref{sec:examples}, we will expand both the class of self-similar sets and the type of perturbation applied.

\subsection{Comparison with Biebler's thickness and Gap Lemma}
\label{subsec:Biebler}

Biebler's definition \cite[Section 2]{Biebler} of thickness for dynamically defined sets in the complex plane is quite involved.  We can compare it to ours for the family of corner self-similar sets from \S \ref{subsec:simple-example} (which in the plane falls under the class of sets for which the thickness from \cite{Biebler} is defined). An inspection of the definition shows that
\[
\tau_{\mathcal{B}}(C_{\ell,n}) =\frac{\frac{\ell}{2}}{\sqrt{\sqrt{2}g}},
\]
where $g=\frac{2-n\ell}{n-1}$ is the gap as in \S \ref{subsec:simple-example}. From \eqref{eq:tau-C}, see that
\[
\frac{\tau(C_{\ell,n})}{\tau_{\mathcal{B}}(C_{\ell,n})}= \frac{\ell}{g}   \frac{\sqrt{\sqrt{2}g}}{\frac{\ell}{2}} = \frac{2^\frac{5}{4}}{\sqrt{g}}.
\]
In particular,  since $g<\frac{2}{n-1}$,
\[
\tau > 2^\frac{3}{4} \sqrt{n-1} \tau_{\mathcal{B}}.
\]

Moreover, in the self-similar case, Biebler requires a $(1/20)$-uniform denseness condition for the Euclidean distance (in the general conformal case the value of $r$ is smaller). So both our hypotheses in the Gap Lemma hold more broadly (even though Biebler only requires the product of the thicknesses to exceed $1$). Moreover, unlike Biebler, in our Gap Lemma we allow the sets $S_{\emptyset}^1$ and $S_{\emptyset}^2$ to be different.

Finally, we compare the proof strategies. Biebler constructs a sequence of nested chunks of the same Cantor set that intersect the other Cantor set. While we inductively construct a sequence of chunks in one of the Cantor sets that intersects the other Cantor set, the roles of the Cantor sets can change at any step of the induction. This results in a simpler and more flexible argument.

\subsection{Relationship with Hausdorff dimension}
\label{subsec:Hausdorff-dim}

It is well known that, in the real line, sets of large thickness have large Hausdorff dimension, see \cite[Proposition 5]{PalisTakens}, while the converse is easily seen to be false. Indeed, thickness can be seen as a more robust and uniform notion of size compared to Hausdorff dimension, which helps explain its usefulness. In higher dimension, we have:

\begin{lemma} \label{lem:Hausdorff-dim}
Let $C \subseteq \R^d$ be a compact set with positive thickness $\tau:=\tau (C, \{S_I\}_I)$, where each ball has at least $M_0$ children.
Assume that there is a constant $c \in (0,2)$ so that for any $S$ in the system of balls, and for any two (distinct) children $S_1$ and $S_2$ of $S$ we have
\begin{equation} \label{eq:separation}
\dist (S_1, S_2) \geq c \rad (S).
\end{equation}
Then,
\[
\dim_H (C) \geq \frac{d}{1+\frac{\log (1+\frac{1}{\tau})}{\log (M_0)}}.
\]
\end{lemma}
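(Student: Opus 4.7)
The plan is to prove the dimension bound via the mass distribution principle, building a natural probability measure $\mu$ on $C$ along the tree of cylinders $\{S_I\}_I$ and controlling its local dimension using the thickness and separation hypotheses. I would set $\mu(S_\emptyset)=1$ and, at each node $S_I$, split the mass among its $k_I\ge M_0$ children. The simplest choice is uniform, $\mu(S_{I,i})=\mu(S_I)/k_I$, which yields $\mu(S_I)\le M_0^{-\ell(I)}$; if the child radii $\rho_i=\rad(S_{I,i})$ are very uneven, I would instead split proportionally to $\rho_i^s$ for the target exponent, so that a cylinder of radius $R$ carries mass roughly $R^s$.

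The key geometric input is a Moran-type inequality relating $R_I=\rad(S_I)$ to the $\rho_i$. Using the remark that $\dist(x, C\cap S_I)\le 2h_I$ for $x\in S_I$, together with $C\cap S_I\subseteq\bigcup_i S_{I,i}$, one has $S_I\subseteq \bigcup_i N(S_{I,i}, 2h_I)$, so comparing Lebesgue volumes (the $\ell^\infty$ or norm ball of radius $\rho$ has volume proportional to $\rho^d$) gives $R_I^d\le \sum_i (\rho_i+2h_I)^d$. The thickness bound $h_I\le \rho_i/\tau$ then yields $\sum_i \rho_i^d\ge R_I^d/(1+2/\tau)^d$, the quantitative content of ``large thickness forces the children to fill a definite fraction of the parent.'' This is the only step where $\tau$ enters in a nontrivial way.

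The separation hypothesis enters at the Frostman stage. Given $x\in C$ and small $r$, one identifies the level $n$ along the unique chain of ancestors of $x$ (unique by the separation \eqref{eq:separation}) for which $\rho_{I_n}$ is comparable to $r$. The separation then ensures that only $O(1)$ level-$n$ cylinders meet $B(x,r)$, so $\mu(B(x,r))$ is bounded by a constant multiple of the mass of a single cylinder; combining with the Moran-type inequality and iterating down the tree yields $\mu(B(x,r))\lesssim r^s$ for the exponent $s$ that the formula predicts, and the mass distribution principle delivers $\dim_H(C)\ge s$.

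The hard part is handling very uneven children. The uniform measure fails the Frostman bound at a cylinder much smaller than its siblings, so for the claimed exponent one must use the weighted measure $\mu(S_{I,i})\propto \rho_i^s$, verify the Moran inequality $\sum_i\rho_i^s\ge R_I^s$ for the chosen $s$ (which follows from $\rho_i\le R_I$, convexity, and the volume bound above for $s\le d$, up to a constant that must be absorbed by slightly shrinking $s$), and show that the multiplicative errors from the volume inequality compound only to a controllable factor across levels. Matching the precise numerical constant in the stated formula requires a careful choice of weights and, likely, a slight sharpening of the covering step that restricts $x$ to the interior $\{y\in S_I:\dist(y,\partial S_I)\ge h_I\}$ so that the bad factor of $2$ in $2h_I$ is removed.
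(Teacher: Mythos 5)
Your proposal is correct and follows essentially the same route as the paper: a Moran-type measure on the tree of balls weighted by powers of the child radii, the volume comparison $R_I^d \le \sum_i (\rho_i + O(h_I))^d$ combined with $h_I \le \rho_i/\tau$, and the separation hypothesis to localize $B(x,r)$ to a single cylinder of radius $\lesssim r$ before applying the mass distribution principle. The two delicate points you flag are handled in the paper exactly as you anticipate: the exponent is pinned down by minimizing $\sum_i x_i^{d\beta}$ subject to $\sum_i x_i^d = (1+1/\tau)^{-d}$ over $M_I \ge M_0$ children (the power-mean/convexity step, which is precisely where $M_0$ enters), and the compounding of multiplicative errors is avoided not by shrinking $s$ but by normalizing the weights at each node with a node-dependent exponent $\widetilde{\beta}_I \ge \beta$ chosen so that they sum to exactly $1$, after which the bound $\mu(S_J)\le r_J^{d\beta}$ telescopes with no loss.
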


\begin{proof}
By scaling, we can assume that $r_{\emptyset}=\rad (S_{\emptyset})=1$. Let $r_I$ denote the radius of $S_I$. By the definition of thickness, we know that $h_I \leq r_{Ij}/\tau$ for all $I,j$.

Let $M_I$ be the number of children of $S_I$. We denote the closed $\delta$-neighbourhood of a set $A$ by $N(A, \delta)$. By the definition of $h_I$,
\[
S_I \subseteq \bigcup_{1\leq j\leq M_I} N(S_{Ij}, h_I).
\]
Then, using that $\mathcal{L}^d(B_r)=c\cdot r^d$ where $c>0$ depends on $d$ and the norm,
\begin{align*}
c \cdot r_I^d &= \mathcal{L}^d(S_I) \leq \mathcal{L}^d \left( \bigcup_{1\leq j\leq M_I} N(S_{Ij}, h_I)\right) \\
&= c \cdot \sum_{1\leq j\leq M_I} (r_{Ij}+h_I)^d \leq c \cdot \sum_{1\leq j\leq M_I} r_{Ij}^d (1+\tau^{-1})^d.
\end{align*}
So, for every $I$ we get
\[
\frac{1}{(1+\tau^{-1})^d}\leq \sum_{1\leq j\leq M_I} \left( \frac{r_{Ij}}{r_I}\right)^d.
\]
One can check that, for $\beta, \sigma>0$, $M \in \mathbb{N}$, the minimum of
\[
\left\{\sum_{1 \leq j \leq M} x_j^{d\beta}: \ x_j>0, \ \sum_j x_j^d=\sigma\right\}
\]
is attained when all $x_i$ are equal. Applying this to $M=M_I$, $x_j= \frac{r_{Ij}}{r_I}$, $\sigma=\frac{1}{(1+\frac{1}{\tau})^d}$, and
\[
\beta_I=\frac{\log (M_I)}{\log(M_I)+\log(1+\frac{1}{\tau})}= \frac{1}{1+\frac{\log(1+\frac{1}{\tau})}{\log (M_I)}}\geq \frac{1}{1+\frac{\log(1+\frac{1}{\tau})}{\log (M_0)}}=:\beta,
\]
we see that
\[
\sum_{1\leq j\leq M_I} \left( \frac{r_{Ij}}{r_I}\right)^{d\beta} \ge \sum_{1\leq j\leq M_I} \left( \frac{r_{Ij}}{r_I}\right)^{d\beta_I}  \ge 1.
\]

Hence for each $I$ there is $\widetilde{\beta}_I \ge \beta_{I}\ge \beta$ such that $\sum_{1\leq j\leq M_I} \left( \frac{r_{Ij}}{r_I}\right)^{d\widetilde{\beta}_I}  = 1$. We define a probability measure $\mu$ on $C$ by setting
\[
\mu (S_{u_1 \cdots u_k}):=\left( \frac{r_{u_1 \cdots u_k}}{r_{u_1 \cdots u_{k-1}}}\right)^{d\widetilde{\beta}_{r_{u_1 \cdots u_{k-1}}}}
\left( \frac{r_{u_1 \cdots u_{k-1}}}{r_{u_1 \cdots u_{k-2}}}\right)^{d\widetilde{\beta}_{r_{u_1 \cdots u_{k-2}}}} \cdots \left( \frac{r_{u_1}}{r_{\emptyset}}\right)^{d\widetilde{\beta}_{r_{\emptyset}}}.
\]
Since $\beta \leq \beta_I$ for all $I$ and $r_{\emptyset}=1$, we get
\[
\mu (S_{u_1 \cdots u_k})\leq \left( \frac{r_{u_1 \cdots u_k}}{r_{u_1 \cdots u_{k-1}}}\right)^{d\beta}
\left( \frac{r_{u_1 \cdots u_{k-1}}}{r_{u_1 \cdots u_{k-2}}}\right)^{d\beta} \cdots \left( \frac{r_{u_1}}{r_{\emptyset}}\right)^{d\beta}= r_{u_1 \cdots u_k}^{d\beta}.
\]

Fix a ball $B(x,r)$. If the ball intersects at most one $S_k$ ball for every $k$, then $\mu(B(x,r))=0$. Otherwise, there is a level $k\ge 0$ so that $B(x,r)$ intersects just one ball $S_k$ of level $k$, and it intersects at least $2$ children of $S_k$. The diameter $2r$ is at least the distance between these children, so we get from the separation assumption \eqref{eq:separation} that $2r \geq c r_{S_k}$. Hence, since $B(x,r)$ intersects just $S_k$, we get
\[
\mu (B(x,r)) \leq r_{S_k}^{d\beta} \leq (2/c)^{d\beta} r^{d\beta}.
\]
The claim follows from the mass distribution principle.
\end{proof}

\begin{remark}
In the real line, this bound is similar but worse compared to the one in \cite{PalisTakens}.  In their proof they exploit the order structure of the real numbers (they use that there exists of a system of chunks so that each chunk has exactly $M_0=2$ children which are located at the left and right ends of the chunk), so it seems difficult to extend the same argument to higher dimensions, where the $h_I$ can be realized at the boundary of the corresponding ball.
\end{remark}

\section{Proof of the Gap Lemma}

\label{sec:gap-lemma}

In this section, we prove Theorem \ref{Theo:GapLemma}. We start by noting a simple consequence of the definitions.

\begin{remark}\label{remark:h}
If a system of balls $\{S_I\}_I$ is $r$-uniformly dense with respect to $C$, then
\[
\max_i \rad{S_{Ii} \leq r \rad (S_I)} \ \text{ and } \  h_I \leq 2r \rad(S_I).
\]
The second inequality follows from the fact that for each $x\in S_I$ we can find a ball $B$ of radius $r$ with $x\in B\subset S_I$ (if $x$ is close to the boundary of $S_I$ then we can't take $x$ to be the center). We also get from the first inequality and the definition of thickness that
\[
\tau \leq r \inf_{n \geq 0} \inf_{\ell(I)=n} \frac{\rad(S_I)}{h_I}.
\]
\end{remark}

\begin{proof}[Proof of Theorem \ref{Theo:GapLemma}]
By compactness, in order to prove the theorem,it suffices to show that there is a sequence of points $(\alpha_n^1, \alpha_n^2) \in C^1 \times C^2$ so that $\dist (\alpha_n^1, \alpha_n^2) \to 0$.

To do this, it is enough to prove by induction that for every $n \geq 0$ there are $j_n \in \{1,2\}$, a word $J_n^{3-j_n}$ and
\[
\alpha_n:=\alpha_n^{j_n} \in C^{j_n} \cap (1-2r) S_{J_n}^{3-j_n},
\]
where $\rad (S_{J_{n+1}}^{3-j_{n+1}}) \leq r \rad (S_{J_{n}}^{3-j_n})$. We can then take any point $\alpha_n^{3-j_n} \in C^{3-j_n} \cap  S_{J_n}^{3-j_n}$ to get the desired sequence $(\alpha_n^1, \alpha_n^2)$.

By hypothesis \eqref{hyp0}, there is $\alpha_0=\alpha_0^1 \in C^1 \cap (1-2r) S_{\emptyset}^2$, so the case $n=0$ holds.

We assume the claim holds for some $n$ and will establish it for $n+1$. Other than \eqref{hyp0}, which will not be used in the rest of the proof, the assumptions and  the inductive claim are symmetric, so we can assume that $\alpha_n \in C^1\cap (1-2r) S_L^2$. We are going to prove that
there are $j_{n+1} \in \{1,2\}$ and
\[
\alpha_{n+1}:=\alpha_{n+1}^{j_{n+1}} \in C^{j_{n+1}} \cap (1-2r) S_{J_{n+1}}^{3-j_{n+1}},
\]
with $\rad(S_{J_{n+1}}^{3-j_{n+1}})\le r \rad(S_L^2)$.

Since $\alpha_n \in C^1$, there exists a sequence $\{S_k^1\}_{k \in \N_0}$, where $S_k^1$ is a ball of $C^1$ of level $k$ of the construction containing $\alpha_n$.
Suppose first that $(1-2r)\rad (S_{\emptyset}^1) \geq h_L^2$. Then  we take $k \geq 0$ to be the largest such that $(1-2r) \rad (S_k^1)\geq  h_L^2$. By hypothesis \eqref{hypTau},
\begin{align*}
\frac{1}{(1-2r)^2} &\leq \tau(C^1, \{S_I^1\}_I) \tau(C^2, \{S_L^2\}_L) \\
&\leq \frac{\rad(S_{k+1}^1)}{h_{S_k}^1} \frac{\min_{\ell} \rad (S_{L, \ell}^2)}{h_L^2}<\frac{\min_{\ell}\rad (S_{L, \ell}^2)}{(1-2r) h_{S_k^1}}.
\end{align*}
Otherwise, if $(1-2r)\rad (S_{\emptyset}^1) < h_L^2$, we take $k=0$ (so $S_k^1=S_{\emptyset}^1$). By hypothesis \eqref{hypTau} we have
\begin{align*}
\frac{1}{(1-2r)^2} &\leq \tau(C^1, \{S_I^1\}_I) \tau(C^2, \{S_L^2\}_L) \\
&\leq \frac{\rad(S_{1}^1)}{h_{\emptyset}^1} \frac{\min_{\ell} \rad (S_{L, \ell}^2)}{h_L^2}<\frac{\min_{\ell}\rad (S_{L, \ell}^2)}{(1-2r) h_{\emptyset}^1}.
\end{align*}
Hence, in any case,
\begin{equation}\label{eq:smallhole}
h_{S_k^1} < (1-2r)\min_{\ell} \rad (S_{L, \ell}^2).
\end{equation}

We split the proof into two cases.

\textbf{Case 1}: $\rad (S_k^1) \geq  r \rad (S_L^2)$.

We claim that there is a ball $B$ contained in $S_k^1 \cap S_L^2$, of radius at least $r \rad (S_L^2)$.
If $S_k^1 \subseteq S_L^2$, we take $B:=S_k^1$. Otherwise, since $\alpha_n \in (1-2r)S_L^2\cap S_k$ by the inductive hypothesis and construction, we have that $(S_L^2)^C$ and $(1-2r) S_L^2$ both intersect $S_k$. Let $M$ be the line through the centers of $S_k^1$ and $S_L^2$ (if the centers are equal, the ball centered at that point of radius $r \rad(S_L^2)$ works). We let $y$, $z$ points where $M$ hits $\partial [(1-2r) S_L^2]$ and $\partial S_L^2$ respectively, with $\dist(y,z)=2r$. If $x$ is the midpoint of $y,z$, then $B=B[x,r\rad(S_L^2)]$ is the ball we are looking for.

Then, by the denseness assumption, there is a child $S_{L,\ell}^2 \subseteq B \subseteq S_k^1 \cap S_L^2$. In particular, $(1-2r)S_{L,\ell}^2 \subseteq S_k^1$. Recalling \eqref{eq:smallhole} and applying the definition of $h_{S_k}^1$ to the center of $S_{L, \ell}^2$, we get that there is $\alpha_{n+1} \in C^1\cap (1-2r)S_{L,\ell}$.
In this case we take $j_{n+1}=1$, $S_{J_{n+1}}^2=S_{L,\ell}^2$ which satisfies $\rad(S_{L,\ell}^2)\le r \rad(S_L^2)$ by Remark \ref{remark:h}.

\textbf{Case 2}: $\rad (S_k^1) <  r \rad (S_L^2)$.

Since we know that $\alpha_n \in S_k^1 \cap (1-2r) S_L^2 \neq \emptyset$, we get that $S_k^1 \subseteq S_L^2$.

Note that in this case $k \neq 0$ by hypothesis \eqref{hyp0b}, which means that $k$ is the largest so that $(1-2r) \rad (S_k^1)\geq h_L^2$.
Since $(1-2r) S_k^1$ is a ball of radius $(1-2r) \rad (S_k^1)\geq h_L^2$ that is contained in $S_L^2$, applying the definition of $h_L^2$ to the center of $(1-2r) S_k^1$, we get that there is a $\alpha_{n+1} \in (1-2r) S_k^1 \cap C^2$. In this case we take $j_{n+1}=2$, $S_{J_{n+1}}^1=S_{k}^1$ which satisfies the shrinking condition by the assumption of Case 2.

This completes the proof of Theorem \ref{Theo:GapLemma}.
\end{proof}

\section{Examples and applications}

\label{sec:examples}

\subsection{Self-homothetic sets}

We show that thickness can be explicitly bounded for self-homothetic sets in any dimension. See e.g. \cite[\S 2.2]{FalconerTechniques} for an introduction to iterated function systems (IFS's).
\begin{lemma} \label{lem:thickness-selfsim}
Let $C \subseteq (\R^d, \dist)$  be the attractor of an IFS formed by homothetic functions $f_i(x)=\lambda_i x +t_i$ with $\lambda_i \in (0,1)$. Suppose $f_i(B[0,1])\subset B[0,1]$ (which can always be achieved by a change of coordinates) and let $S_{i_1\ldots i_k} = f_{i_1}\cdots f_{i_k}(B[0,1])$.
Then,
\begin{equation} \label{eq:h-claim}
h_\emptyset \leq \max_{x \in S_{\emptyset} \setminus \bigcup_i S_i} \min_i \frac{\dist (x,t_i)-\lambda_i}{1-\lambda_i}.
\end{equation}
Moreover,
\[
\tau (C, \{S_I\}_I) \geq  \frac{\min_j \lambda_j}{h_\emptyset},
\]
and $C$ is $(2 \max_i \lambda_i + h_{\emptyset})$-uniformly dense.
\end{lemma}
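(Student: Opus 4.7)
\emph{Plan.} I would prove the three assertions in the stated order. The essential mechanism is that $f_i(C)\subseteq C$ combined with $f_i$ being a homothety of ratio $\lambda_i$: for any $x\in S_i$, $f_i^{-1}(x)\in B[0,1]=S_{\emptyset}$, and so
\[
\dist(x,C)\le \dist(x,f_i(C))=\lambda_i\,\dist(f_i^{-1}(x),C)\le \lambda_i h_{\emptyset}.
\]
The same reasoning applied through $f_I=f_{i_1}\circ\cdots\circ f_{i_k}$ (a homothety of ratio $\lambda_I:=\lambda_{i_1}\cdots\lambda_{i_k}=\rad(S_I)$ still satisfying $f_I(C)\subseteq C$) gives $h_I\le \lambda_I h_{\emptyset}$ for every word $I$.

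For Claim 1 (the inequality \eqref{eq:h-claim}), note first that the contraction bound above forces the maximum defining $h_{\emptyset}$ to be attained on $S_{\emptyset}\setminus\bigcup_i S_i$ (otherwise $h_{\emptyset}=0$ and \eqref{eq:h-claim} is trivial). For such an $x$ and any $i$, the closest point of $S_i=B[t_i,\lambda_i]$ to $x$ lies at distance $\dist(x,t_i)-\lambda_i$, and combining this with the contraction bound applied at that closest point yields
\[
\dist(x,C)\le (\dist(x,t_i)-\lambda_i)+\lambda_i h_{\emptyset}.
\]
Write $H$ for the right-hand side of \eqref{eq:h-claim}. Choosing $i=i(x)$ to realize the inner minimum gives $\dist(x,t_{i(x)})-\lambda_{i(x)}\le (1-\lambda_{i(x)})H$, and substituting produces
\[
\dist(x,C)\le H+\lambda_{i(x)}(h_{\emptyset}-H).
\]
If $h_{\emptyset}>H$, then bounding $\lambda_{i(x)}\le \max_j\lambda_j<1$ and taking the maximum over $x$ yields $h_{\emptyset}\le H+(\max_j\lambda_j)(h_{\emptyset}-H)$, which simplifies to $h_{\emptyset}\le H$, a contradiction.

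Claim 2 is then immediate from $h_I\le \lambda_I h_{\emptyset}$ together with $\rad(S_{Ij})=\lambda_I\lambda_j$. For Claim 3 (uniform denseness), applying $f_I^{-1}$ to $S_I$ again reduces to the case $I=\emptyset$: given a ball $B=B[z,\rho]\subseteq B[0,1]$ with $\rho\ge r:=2\max_i\lambda_i+h_{\emptyset}$, the definition of $h_{\emptyset}$ supplies $c\in C$ with $\dist(z,c)\le h_{\emptyset}$; since $c\in S_i=B[t_i,\lambda_i]$ for some $i$, we have $\dist(z,t_i)+\lambda_i\le h_{\emptyset}+2\lambda_i\le \rho$, and thus $S_i\subseteq B$. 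The main obstacle throughout is the absorption step in the proof of \eqref{eq:h-claim}: because the $\lambda_i$ can differ, one cannot cancel a common $1-\lambda$ factor at the end, and the weighted form in \eqref{eq:h-claim} only emerges from first choosing $i=i(x)$ adapted to $x$ and then dominating $\lambda_{i(x)}$ by the uniform $\max_j\lambda_j<1$.
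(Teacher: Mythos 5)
Your proof is correct and follows essentially the same strategy as the paper: the contraction bound $h_I \leq \lambda_I h_\emptyset$ (obtained from $f_I(C)\subseteq C$) drives the thickness estimate, the uniform denseness argument, and the key inequality for $h_\emptyset$. The only difference is cosmetic: where the paper picks a maximizing $x_0$ and rearranges $h_\emptyset \leq \dist(x_0,t_i)-\lambda_i+\lambda_i h_\emptyset$ for each $i$ separately (so the factor $1-\lambda_i$ does cancel per $i$, contrary to the worry in your last sentence), you instead fix $i=i(x)$ and argue by contradiction against $H$ using the uniform bound $\max_j\lambda_j<1$; both routes are valid.
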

We make some remarks on this statement. Firstly, the estimate in \eqref{eq:h-claim} involves only a finite number of balls, so the bounds on $\tau$ and the density are effective, providing many explicit examples (without a lattice structure as in \S\ref{subsec:simple-example}) for which the Gap Lemma applies. Secondly, the bounds are clearly continuous in the parameters $\lambda_i$ and $t_i$, so we get robust intersections in the family of self-homothetic sets. The lemma does not extend to more general similarities, since the norm is not in general invariant under rotations.

\begin{proof}[Proof of Lemma \ref{lem:thickness-selfsim}]
By self-similarity, for any $I=(i_1 \cdots i_n)$ we have $\rad(S_I)=\lambda_I= \prod_{1 \leq j \leq n}\lambda_{i_j}$,
and
\begin{equation} \label{eq:h-selfsim}
h_{S_I}\leq \max_{x \in S_I} \dist (x, C \cap S_I)\leq  \lambda_I h_{\emptyset}.
\end{equation}
(Note that in case the siblings are disjoint the last inequality becomes an equality, but in general the inequality can be strict.) This yields $\tau(C, \{S_I\}) \ge \min_j \lambda_j/h_\emptyset$.

We next see that $C$ is $(2 \max_i \lambda_i + h_{\emptyset})$-uniformly dense. By self-similarity, again it is enough to look at the first level only.
Let $B=B[x,r]$ be a closed ball contained in $S_{\emptyset}$ with radius $r\geq h_{\emptyset}+2 \max_i \lambda_i$. By definition, we know there is $y \in C$ so that $\dist (x,y)=\dist (x,C)\leq h_{\emptyset}$. There exists $i$ such that $y \in S_i$. Then, we have that $S_i \subseteq B$.

It remains to show \eqref{eq:h-claim}. Using \eqref{eq:h-selfsim} twice, we get
\begin{align*}
h_\emptyset &:=\max_{x \in S_\emptyset} \dist(x,C)\\
&= \max\big\{ \max_i \max_{x \in S_i} \dist (x,C), \max_{x \in S_{\emptyset}\setminus \bigcup_i S_i} \dist(x,C) \big\}\\
&\leq \max \big\{\max_i h_i, \max_{x \in S_{\emptyset}\setminus \bigcup_i S_i} \min_i (\dist(x, S_i)+h_i)\big\}\\
&\leq  \max \big\{\max_i \lambda_i h_\emptyset, \max_{x \in S_{\emptyset}\setminus \bigcup_i S_i} \min_i (\dist(x, t_i)-\lambda_i+\lambda_i h_{\emptyset}) \big\}
\end{align*}
Since $\max_i \lambda_i<1$, the maximum is not achieved by $\lambda_i h_\emptyset$, and so
\[
h_\emptyset \leq \max_{x \in S_{\emptyset}\setminus \bigcup_i S_i} \min_i (\dist(x, t_i)-\lambda_i+\lambda_i h_{\emptyset}).
\]
Hence there is $x_0 \in S_{\emptyset}\setminus \bigcup_i S_i$ so that for all $i$ we have $h_\emptyset \leq  \dist(x_0, t_i)-\lambda_i+\lambda_i h_{\emptyset}$.
Rearranging, we get \eqref{eq:h-claim}.

\end{proof}

\subsection{Robustness under $C^1$ perturbations}

It is well known that Newhouse thickness (unlike Hausdorff dimension) is not invariant under smooth diffeomorphisms. However, it is almost invariant for ``almost-linear'' diffeomorphisms. In this section we extend this fact to our notion of thickness. For simplicity we work with the $\ell^\infty$ norm.

\begin{lemma} \label{lem:C1-robust}
Let $C \subset B_\infty[0,1]$ be a compact set with system of balls (in the $\ell^\infty$ norm) $\{S_I\}_I$, and assume $\min_j r_{Ij} \ge \lambda r_I$ for some $\lambda>0$ and all $I,j$.

Let $f: B_{\infty}[0,1]\subset \R^d \to \text{Im}(f) \subset \R^d$ an invertible $C^1$ function with $\|Df(x)-I\|_{\infty}< \varepsilon$ for all $x \in B_{\infty}[0,1]$, and
some $\varepsilon \in (0,1)$.

Then $f(C)$ has a system of balls $R_I$ (still considering $\dist_\infty)$ such that
\[
\tau(f(C),\{R_I\}_I) \geq \frac{\tau(C,\{S_I\}_I)}{1+\tau(C,(S_I)_I)\frac{2\varepsilon}{(1+\varepsilon)\lambda}}.
\]
\end{lemma}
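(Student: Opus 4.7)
The plan is to transport the system $\{S_I\}_I$ across $f$ with a small isotropic enlargement and control how the distortion of $f$ affects each $h_I$. Applying the coordinate-wise mean value theorem to segments in $B_\infty[0,1]$, the hypothesis $\|Df-I\|_\infty<\varepsilon$ gives the bi-Lipschitz estimate
\[
(1-\varepsilon)\|x-y\|_\infty \le \|f(x)-f(y)\|_\infty \le (1+\varepsilon)\|x-y\|_\infty.
\]
Writing $S_I=B_\infty[c_I,r_I]$, I would then set $R_I := B_\infty[f(c_I),(1+\varepsilon)r_I]$. The bi-Lipschitz bound yields the sandwich $B_\infty[f(c_I),(1-\varepsilon)r_I]\subseteq f(S_I)\subseteq R_I$; the nesting $R_{Ij}\subseteq R_I$ reduces to $\|c_{Ij}-c_I\|_\infty\le r_I-r_{Ij}$ (which itself follows from $S_{Ij}\subseteq S_I$ in the $\ell^\infty$ norm) composed with the Lipschitz bound; the radii shrink to zero along infinite words, and $\bigcap_n\bigcup_{\ell(I)=n}R_I=f(C)$ follows from König's lemma together with the continuity of $f$. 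So $\{R_I\}_I$ is a legitimate system of balls for $f(C)$.

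The heart of the argument is estimating $h_I^R := \max_{x\in R_I}\dist_\infty(x,f(C))$. Given $x\in R_I$, I would radially retract to
\[
x' := f(c_I)+\tfrac{1-\varepsilon}{1+\varepsilon}(x-f(c_I)),
\]
which lies in the inner ball $B_\infty[f(c_I),(1-\varepsilon)r_I]\subseteq f(S_I)$. Writing $x'=f(y)$ with $y\in S_I$ and picking $z\in C$ with $\|y-z\|_\infty\le h_I$ (which exists by definition of $h_I$, and is automatically in the domain of $f$ since $C\subseteq B_\infty[0,1]$), the Lipschitz bound gives $\|x'-f(z)\|_\infty\le (1+\varepsilon)h_I$; combined with $\|x-x'\|_\infty\le \tfrac{2\varepsilon}{1+\varepsilon}\|x-f(c_I)\|_\infty \le 2\varepsilon r_I$, this yields $h_I^R \le 2\varepsilon r_I + (1+\varepsilon)h_I$. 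Since $\rad(R_{Ij})=(1+\varepsilon)r_{Ij}$ and the sibling hypothesis gives $r_I\le \min_j r_{Ij}/\lambda$, letting $t_I:=\min_j r_{Ij}/h_I\ge\tau$ and simplifying,
\[
\frac{\min_j \rad(R_{Ij})}{h_I^R} \ge \frac{(1+\varepsilon)\min_j r_{Ij}}{(2\varepsilon/\lambda)\min_j r_{Ij}+(1+\varepsilon)h_I} = \frac{1}{1/t_I + \frac{2\varepsilon}{(1+\varepsilon)\lambda}} \ge \frac{\tau}{1+\tau\frac{2\varepsilon}{(1+\varepsilon)\lambda}},
\]
and taking the infimum over $I$ yields the claim.

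The main subtlety is the shell term $2\varepsilon r_I$ in the bound for $h_I^R$: it appears because $R_I$ must be enlarged by the factor $(1+\varepsilon)$ in order to contain $f(S_I)$, producing a thin corona between $f(S_I)$ and $R_I$ whose points are not automatically close to $f(C)$. The sibling-ratio hypothesis $\min_j r_{Ij}\ge \lambda r_I$ is exactly what is needed to convert this additive error into a multiplicative perturbation of the thickness ratio; this is precisely where the factor $\lambda$ in the denominator of the final bound originates. Everything else is bookkeeping driven by the bi-Lipschitz estimate.
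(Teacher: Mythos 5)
Your proposal is correct and follows essentially the same route as the paper: the same coordinatewise mean value theorem giving the bi-Lipschitz bounds, the same enlarged balls $R_I=B_\infty[f(c_I),(1+\varepsilon)r_I]$ with the inner ball $B_\infty[f(c_I),(1-\varepsilon)r_I]\subseteq f(S_I)$, the same estimate $h_{R_I}\le 2\varepsilon r_I+(1+\varepsilon)h_{S_I}$ (your radial retraction is just a slightly different way of crossing the $2\varepsilon r_I$ corona), and the same final algebra using $\min_j r_{Ij}\ge\lambda r_I$. The only addition is your explicit verification that $\{R_I\}_I$ is a system of balls, which the paper leaves implicit.
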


\begin{proof}
Recall that if $A\in\text{GL}_d(R)$, then $\|A\|_{\infty}:=\max_i \sum_j |a_{i,j}|$. Since $\|Df(x)-I\|_{\infty}< \epsilon$, for $x\in B_{\infty}[0,1]$ we have:
\begin{enumerate}[(i)]
\item  $1-\varepsilon < \|Df(x)\|_{\infty}<1+\varepsilon$.
\item \label{it:D-f-inverse} $\|I-(Df(x))^{-1}\|_{\infty}\leq \|(Df(x))^{-1}\|_{\infty} \|Df(x)-I\|_{\infty}<\|Df(x)^{-1}\|_{\infty} \varepsilon$. Then,
    \[
    \|(Df(x))^{-1}\|_{\infty} \leq \|I\|_{\infty}+ \|I-(Df(x))^{-1}\|_{\infty}<1+ \|(Df(x))^{-1}\|_{\infty} \varepsilon,
    \]
and hence
\[
\|D(f^{-1})(f(x))\|_{\infty}=\|(Df(x))^{-1}\|_{\infty}< \frac{1}{1-\varepsilon}.
\]
\end{enumerate}

Let $f=(f_1, \cdots, f_d)$ with $f_i:  B_{\infty}[0,1]\subset \R^d \to \R$. By the Mean Value Theorem, $f_i(x)-f_i(y)= \nabla f_i (\xi_i) (x-y)$ where $\xi_i$ is an intermediate point. Then,
\[
|f_i(x)-f_i(y)|\leq \|\nabla f_i (\xi_i)\|_1 \|x-y\|_\infty < (1+\varepsilon) \|x-y\|_\infty.
\]
Since this holds for all $i$, we get
\begin{equation} \label{eq:almost-linear}
\|f(x)-f(y)\|_{\infty} \leq (1+\varepsilon) \|x-y\|_\infty.
\end{equation}
Writing $S_I:=B_{\infty}[z_I,r_I]$, we deduce
\[
f(S_I) \subseteq B_{\infty}[f(z_I),(1+\varepsilon) r_I]=:R_I.
\]
This shows in particular that $(R_I)_I$ is a system of balls for the compact set $f(C)$.

With the same argument as before (but applied to $f^{-1}$ instead of $f$ and using \eqref{it:D-f-inverse} above) we get
\[
\|f^{-1}(x)-f^{-1}(y)\|_{\infty} < \frac{1}{1-\varepsilon} \|x-y\|_{\infty},
\]
and therefore $f^{-1} \left( B_{\infty}[f(z_I),r_I(1-\varepsilon)] \right) \subseteq B_{\infty}[z_I,r_I]$. Then,
\[
B_I:=B_{\infty}[f(z_I),r_I(1-\varepsilon)] \subseteq f(S_I).
\]
Let us estimate $h_{R_I}$ (for $f(C)$) in terms of $h_{S_I}$:
\begin{align*}
  h_{R_I} & \leq \max_{y \in R_I} \dist_{\infty}(y, B_I)+ \max_{z \in f(S_I)} \dist_{\infty}(z, f(C)) \\
   & \leq 2r_I \varepsilon + \max_{x \in S_I} \dist_{\infty}(f(x),f(C))\\
   & \overset{\eqref{eq:almost-linear}}{\leq} 2r_I \varepsilon + (1+\varepsilon) h_{S_I}.
\end{align*}

We conclude that, for any word $I$,
\begin{align*}
\frac{\min_j \rad(R_{Ij})}{h_{R_I}} &\geq \frac{\min_j r_{Ij}(1+\varepsilon)}{ 2r_I \varepsilon + (1+\varepsilon) h_{S_I}}\\
&\overset{r_{Ij}\ge\lambda r_I}{\geq} \frac{1}{\frac{2\varepsilon}{(1+\varepsilon)\lambda} +\frac{h_{S_I}}{\min_j r_{Ij}}}\\
&\geq \frac{\tau}{1+\frac{2\varepsilon\tau}{(1+\varepsilon)\lambda}},
\end{align*}
where $\tau=\tau(C,\{S_I\}_I)$. Taking the infimum over $I$ gives the claim.
\end{proof}

\subsection{Application to directional distance sets in $\R^d$}

\begin{definition}
For a fixed  direction $v \in S^{(d-1)}$, we say that $t$ is a distance between points of $C$ in direction $v$ if there are $e_1$ and $e_2$ in $C$ such that $e_1-e_2=tv$.
Equivalently,  $C \cap (C+tv) \neq \emptyset$.

We define $\Delta_v (C)$ as the set of all distances between points in the set $C$ in direction $v$.
\end{definition}

As an application of the Gap Lemma, we have:
\begin{corollary}
Let $C=\bigcap_{n \geq 0} \bigcup_{\ell (I)=n}S_I$ be a compact set in $\R^d$ so that there exists $r \in (0, \frac{1}{3}]$ satisfying
\begin{itemize}
\item $\tau (C, \{S_I\}_I) \geq \frac{1}{1-2r}$
\item $\{S_I\}_I$ is $r$-uniformly dense with respect to $C$
\end{itemize}
Then, there is $a>0$ (depending only on $r$ and the radius of $S_\varnothing$) such that for any direction $v \in S^{(d-1)}$ we have
\[
[0,a]\subseteq \Delta_v (C).
\]
\end{corollary}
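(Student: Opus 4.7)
The plan is to apply the Gap Lemma (Theorem \ref{Theo:GapLemma}) to the pair $C^1 := C$ with the given system $\{S_I\}_I$ and $C^2 := C + tv$ with the translated system $\{S_I + tv\}_I$, for each $t$ in an interval $[0,a]$ still to be chosen. Since $C^1 \cap C^2 \neq \emptyset$ is exactly the statement $t \in \Delta_v(C)$, producing such an $a$ finishes the proof. Hypotheses (\ref{hypTau}), (\ref{hyp0b}) and (\ref{hypUnif}) hold automatically by translation invariance: the two root balls have the same radius, the thickness of the translated system equals $\tau(C,\{S_I\}_I) \ge \frac{1}{1-2r}$ (so the product is at least $\frac{1}{(1-2r)^2}$), and $r$-uniform density is preserved under translation.

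The one nontrivial condition is (\ref{hyp0}), which in our setting requires a point of $C$ within $\dist$-distance $(1-2r)\rad(S_\emptyset)$ of the center $z_\emptyset + tv$ of $S_\emptyset^2$, where $z_\emptyset$ denotes the center of $S_\emptyset$. The key step is to sharpen Remark \ref{remark:h}'s a priori bound $h_\emptyset \le 2r\rad(S_\emptyset)$ using the thickness hypothesis. Taking $I = \emptyset$ in the definition of $\tau$ gives $h_\emptyset \le \min_i \rad(S_{\emptyset,i})/\tau$; combining $\min_i \rad(S_{\emptyset,i}) \le r\rad(S_\emptyset)$ (again from Remark \ref{remark:h}) with $\tau \ge \frac{1}{1-2r}$ yields $h_\emptyset \le r(1-2r)\rad(S_\emptyset) \le (1-2r)\rad(S_\emptyset)$.

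I would then pick $a \in (0, \rad(S_\emptyset))$ small enough, depending only on $r$, $\rad(S_\emptyset)$ and the norm (through the equivalence constant between $\|\cdot\|$ and the Euclidean norm), so that $z_\emptyset + tv \in S_\emptyset$ for every $v \in S^{(d-1)}$ and every $t \in [0,a]$. For such $t$ the definition of $h_\emptyset$ applied at the admissible point $z_\emptyset + tv$ produces some $y \in C$ with $\dist(y, z_\emptyset + tv) \le h_\emptyset \le (1-2r)\rad(S_\emptyset)$, which is precisely condition (\ref{hyp0}); Theorem \ref{Theo:GapLemma} then gives $C \cap (C + tv) \neq \emptyset$, so $t \in \Delta_v(C)$. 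The only delicate ingredient is the sharpening $h_\emptyset \le (1-2r)\rad(S_\emptyset)$: without invoking the thickness bound, the crude estimate from Remark \ref{remark:h} alone would force $r \le 1/4$, whereas the upgrade lets the argument cover the full range $r \in (0,1/3]$.
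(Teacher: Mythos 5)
Your proof is correct, and it follows the paper's overall strategy of applying Theorem \ref{Theo:GapLemma} to $C^1 = C$ and $C^2 = C + tv$, with conditions \eqref{hypTau}, \eqref{hyp0b}, \eqref{hypUnif} immediate by translation invariance. Where you diverge is in the verification of condition \eqref{hyp0}, which is the only non-trivial step. The paper checks it geometrically via the $r$-uniform density hypothesis: for $|t|$ small enough that $(1-2r)(S_\emptyset + tv)\subseteq S_\emptyset$, the shrunken translated ball has radius $(1-2r)\rad(S_\emptyset)\ge r\rad(S_\emptyset)$ (since $r\le 1/3$), so density produces a child $S_i$ inside it and hence a point of $C$. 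You instead sharpen the bound on $h_\emptyset$ using the thickness hypothesis: from $\tau\ge\frac{1}{1-2r}$ together with $\min_i\rad(S_{\emptyset,i})\le r\rad(S_\emptyset)$ (Remark \ref{remark:h}) you get $h_\emptyset\le r(1-2r)\rad(S_\emptyset)\le(1-2r)\rad(S_\emptyset)$, and then apply the definition of $h_\emptyset$ at the shifted center $z_\emptyset+tv$. Both arguments are sound; yours has the modest advantage of not needing $(1-2r)(S_\emptyset+tv)\subseteq S_\emptyset$ but only $z_\emptyset+tv\in S_\emptyset$, so in fact it tolerates a slightly larger $a$ than the paper's argument does. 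Your closing observation --- that the crude bound $h_\emptyset\le 2r\rad(S_\emptyset)$ alone would only cover $r\le 1/4$, and the thickness sharpening is what recovers the full range $r\in(0,1/3]$ --- is accurate and shows you identified precisely where the hypothesis $\tau\ge\frac{1}{1-2r}$ does real work in your variant.
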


\begin{proof}
We can assume without loss of generality that $S_{\emptyset}=B[0,1]$.

Let $v$ be any vector in $S^{(d-1)}$. We are going to show that the sets $C$ and $C+tv$ satisfy the hypothesis of the Gap Lemma (Theorem \ref{Theo:GapLemma}) for $t \in [\frac{-2r}{1-2r}, \frac{2r}{1-2r}]$. Then we will have $C \cap (C+tv) \neq \emptyset$ for any  $t \in [0, \frac{2r}{1-2r}]$, and so $[0,\frac{2r}{1-2r}] \subseteq \Delta_v (C)$.

By assumption,  $\{S_I\}_I$ is uniformly dense with respect to $C$. Since thickness is preserved by translations (translating also the system of balls), we have
\[
\tau(C, \{S_I\}_I) \tau(C+tv, \{S_I+tv\}_I) \geq \frac{1}{(1-2r)^2}.
\]
It remains to show tht $C \cap (1-2r) \left(B[0,1]+tv \right)\neq \emptyset$ (the other symmetric hypothesis is analogous). Since $r \in (0, \frac{1}{3}]$, we have that $(1-2r) \left(B[0,1]+tv \right)$ is a ball of radius at least $r$.
And since $t \in [0, \frac{2r}{1-2r}]$, we have $(1-2r) \left(B[0,1]+tv \right) \subseteq B[0,1]$. Hence, by $r$-denseness, there is a child $S_i$ of $S_{\emptyset}$, contained in $(1-2r) \left(B[0,1]+tv \right)$. In particular, $C \cap (1-2r) \left(B[0,1]+tv \right)\neq \emptyset$.
\end{proof}

We remark that the fact that the distance set has non-empty interior in the setting of this corollary follows from Lemma \ref{lem:Hausdorff-dim} and the Mattila-Sj\"{o}lin Theorem \cite{MattilaSjolin}. The point of the corollary is that there is a uniform interval of distances, containing $0$, in every direction.

\section{Thickness, winning sets, and large finite patterns}

\label{sec:winning-and-patterns}

In this section we obtain a connection between thickness and games, and use it to establish the presence of homothetic copies of finite sets in sets of large thickness. We adapt ideas from \cite{Y21, FalconerYavicoli} to the notion of thickness in this paper. We note that the geometric details differ. An important difference is that we need to deal with finite unions of spheres as the basic family of sets for the game (denoted by $\mathcal{H}$ below) instead of points as in \cite{Y21, FalconerYavicoli}.

\subsection{The potential game}

We recall the definition and basic properties of the \emph{potential game} from \cite{BFS}. Let $\mathcal{H}$ be a family of closed subsets of $\R^d$.
\begin{definition}\label{gamedef}
Given $\alpha, \beta, \rho >0$ and $c \geq 0$, Alice and Bob play the $(\alpha, \beta, c, \rho, \mathcal{H})$-game in $\R^d$ under the following rules:
\begin{itemize}
\item For each $m \in \N_{0}$ Bob plays first, and then Alice plays.
\item On the $m$-th turn, Bob plays a closed ball $B_m:=B[x_m , \rho_m ]$, satisfying $\rho_0 \geq \rho$, and $\rho_{m}\geq \beta \rho_{m-1}$ and $B_m \subseteq B_{m-1}$ for every $m \in \N$.
\item On the $m$-th turn Alice responds by choosing and erasing a finite or countably infinite collection $\mathcal{A}_m$ of sets $N(L_{i,m}, \rho_{i,m})$ with $L_{i,m} \in \mathcal{H}$ and $\rho_{i,m}>0$. Alice's collection must satisfy $\sum_{i} \rho_{i,m}^c \leq (\alpha \rho_m )^c$ if $c>0$, or $\rho_{1,m} \leq \alpha \rho_m$ if $c=0$ (in the case $c=0$ Alice can erase just one set).
\item $\lim_{m \to \infty} \rho_m =0$ (Note that this is a non-local rule for Bob. One can define the game without this rule, adding that Alice wins if $\lim_{m \to \infty} \rho_m \neq 0$. But to make the definitions simpler we added this condition as a rule for Bob.)
\end{itemize}
\end{definition}

Alice is allowed not to erase any set, or equivalently to pass her turn.

There exists a single point $x_{\infty} = \bigcap_{m \in \N_0} B_m$ called the {\it outcome of the game}. We say a set $S \subset \R^d$ is an $(\alpha, \beta, c, \rho, \mathcal{H})$-{\it winning set}, if Alice has a strategy guaranteeing that if $x_{\infty} \notin \bigcup_{m \in \N_0} \bigcup_i N(L_{i,m}, \rho_{i,m})$, then $x_{\infty} \in S$.

Note that the conditions $B_0 \supseteq B_1 \supseteq \cdots$ and $\lim_{m \to \infty} \rho_m =0$ imply $\beta < 1$.

\subsection{Good properties of the game}

The following basic properties are crucial to the applications of the potential game; see \cite{BFS} for more details.

\begin{lemma}[Countable intersection property]\label{Countable intersection property}
Let J be a countable index set, and for each $j \in J$ let $S_j$ be an $(\alpha_j , \beta, c, \rho, \mathcal{H})$-winning set, where $c>0$. Then, the set $S:= \bigcap_{j \in J} S_j$ is $(\alpha ,\beta, c, \rho, \mathcal{H})$-winning where $\alpha^c = \sum_{j \in J} \alpha_j^c$ (assuming that the series converges).
\end{lemma}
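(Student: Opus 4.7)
The plan is to have Alice play all the winning strategies $(\sigma_j)_{j\in J}$ simultaneously against the same Bob. For each $j$, the set $S_j$ being $(\alpha_j,\beta,c,\rho,\mathcal{H})$-winning means there is a strategy $\sigma_j$ which, given any legal sequence of Bob's balls $(B_m)_{m\ge 0}$, produces collections $\mathcal{A}_m^{(j)}=\{N(L_{i,m}^{(j)},\rho_{i,m}^{(j)})\}_i$ with $L_{i,m}^{(j)}\in\mathcal{H}$ and $\sum_i (\rho_{i,m}^{(j)})^c\le(\alpha_j\rho_m)^c$, such that if $x_\infty\notin\bigcup_m\bigcup_i N(L_{i,m}^{(j)},\rho_{i,m}^{(j)})$ then $x_\infty\in S_j$. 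Since each $\sigma_j$ can be regarded as a function of Bob's history alone (Alice's own output is determined by her strategy and Bob's history), Alice can run all the $\sigma_j$ in parallel against the single Bob in the combined game.

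Next I would define Alice's strategy $\sigma$ for the $(\alpha,\beta,c,\rho,\mathcal{H})$-game to be the move, on turn $m$,
\[
\mathcal{A}_m:=\bigcup_{j\in J}\mathcal{A}_m^{(j)}.
\]
Since $J$ is countable and each $\mathcal{A}_m^{(j)}$ is at most countable, $\mathcal{A}_m$ is a legal (at most countable) collection of neighborhoods of elements of $\mathcal{H}$. The $c$-budget check is then the one-line estimate
\[
\sum_{(j,i)}(\rho_{i,m}^{(j)})^c\;\le\;\sum_{j\in J}(\alpha_j\rho_m)^c\;=\;\Bigl(\sum_{j\in J}\alpha_j^c\Bigr)\rho_m^c\;=\;(\alpha\rho_m)^c,
\]
which uses only $c>0$ (so the $c$-th powers add), the definition of $\alpha$, and the convergence hypothesis. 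Bob's admissibility constraints depend only on the balls $B_m$ and not on Alice's erasures, so this does not interfere with the game.

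Finally I would verify that $\sigma$ is winning for $S$. Let $x_\infty=\bigcap_m B_m$ be the outcome of any legal play against $\sigma$, and suppose $x_\infty\notin\bigcup_m\bigcup_{N\in\mathcal{A}_m}N$. Since $\mathcal{A}_m^{(j)}\subseteq\mathcal{A}_m$ for every $j$, we also have $x_\infty\notin\bigcup_m\bigcup_i N(L_{i,m}^{(j)},\rho_{i,m}^{(j)})$ for each $j$. Applying the winning property of $\sigma_j$ to the sequence of Bob's balls in the combined game, we conclude $x_\infty\in S_j$ for every $j\in J$, i.e.\ $x_\infty\in S$. Thus $\sigma$ witnesses that $S$ is $(\alpha,\beta,c,\rho,\mathcal{H})$-winning.

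There is no real obstacle; the only point to be careful about is that the strategies $\sigma_j$ are formulated so that they can be executed against a common Bob (which is standard in this game, since an $(\alpha_j,\beta,c,\rho,\mathcal{H})$-strategy is a function of Bob's history in a game with the same parameters $\beta,\rho$, all of which are shared here). Everything else is the subadditivity computation above and the trivial observation that an outcome avoiding the union of erasures avoids each summand.
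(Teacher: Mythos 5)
Your proposal is correct and is exactly the argument the paper gives: Alice runs all the strategies $\sigma_j$ in parallel and on each turn erases the union of their moves, with the budget check $\sum_j(\alpha_j\rho_m)^c=(\alpha\rho_m)^c$ following from the definition of $\alpha$. Your write-up just supplies the routine details (countability of the combined collection, and that avoiding the union of erasures means avoiding each $\sigma_j$'s erasures) that the paper leaves implicit.
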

To see this, it is enough to consider the following strategy for Alice: in the turn $k$ she plays the union over $j$ of all the strategies of turn $k$.

\begin{lemma}[Monotonicity]\label{Monotonicity}
If $S$ is $(\alpha , \beta, c, \rho, \mathcal{H})$-winning and $\tilde{\alpha} \geq \alpha$, $\tilde{\beta} \geq \beta$, $\tilde{c} \geq c$, $\tilde{\rho} \geq \rho$ and $\mathcal{H} \subseteq \tilde{\mathcal{H}}$, then $S$ is $(\tilde{\alpha} , \tilde{\beta}, \tilde{c}, \tilde{\rho}, \tilde{\mathcal{H}})$-winning.
\end{lemma}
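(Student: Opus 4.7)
The plan is to show that Alice's winning strategy for the easier $(\alpha,\beta,c,\rho,\mathcal{H})$-game transfers directly to a winning strategy for the $(\tilde\alpha,\tilde\beta,\tilde c,\tilde\rho,\tilde{\mathcal{H}})$-game by having Alice simulate the original game inside the new one. Since every parameter change from the tilde'd data to the original data is in Alice's favor, the simulation will be legal in both directions: Bob's moves in the harder game are automatically legal moves in the easier one, and Alice's original responses, replayed verbatim, remain legal in the harder game.

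Concretely, let $\sigma$ denote Alice's winning strategy in the $(\alpha,\beta,c,\rho,\mathcal{H})$-game. Suppose Bob plays a legal sequence $B_m=B[x_m,\rho_m]$ in the $(\tilde\alpha,\tilde\beta,\tilde c,\tilde\rho,\tilde{\mathcal{H}})$-game. First I would check that this is automatically a legal Bob-play in the original game: $\rho_0\ge\tilde\rho\ge\rho$, the nesting $B_m\subseteq B_{m-1}$ is unchanged, $\rho_m\ge\tilde\beta\rho_{m-1}\ge\beta\rho_{m-1}$, and $\rho_m\to 0$. Alice then responds to Bob's $m$-th ball with the collection $\{N(L_{i,m},\rho_{i,m})\}_i$ prescribed by $\sigma$; since $L_{i,m}\in\mathcal{H}\subseteq\tilde{\mathcal{H}}$, these sets belong to the basic family for the new game.

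The main obstacle, and the only calculation that needs care, is to verify that Alice's response continues to satisfy the budget constraint of the harder game, namely $\sum_i\rho_{i,m}^{\tilde c}\le(\tilde\alpha\rho_m)^{\tilde c}$ when $\tilde c>0$. Here I would use that $\sum_i\rho_{i,m}^c\le(\alpha\rho_m)^c$ forces each individual radius to satisfy $\rho_{i,m}\le\alpha\rho_m$, and then, since $\tilde c-c\ge 0$,
\[
\sum_i\rho_{i,m}^{\tilde c}=\sum_i\rho_{i,m}^{c}\,\rho_{i,m}^{\tilde c-c}\le(\alpha\rho_m)^{\tilde c-c}\sum_i\rho_{i,m}^{c}\le(\alpha\rho_m)^{\tilde c}\le(\tilde\alpha\rho_m)^{\tilde c}.
\]
The edge cases $c=0$, $\tilde c=0$ or $c=0$, $\tilde c>0$ are handled separately but identically: there is just one set with $\rho_{1,m}\le\alpha\rho_m\le\tilde\alpha\rho_m$, from which $\rho_{1,m}^{\tilde c}\le(\tilde\alpha\rho_m)^{\tilde c}$ is immediate.

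Finally, the outcome $x_\infty=\bigcap_m B_m$ is the same in both the real and the simulated plays, and the family of erased neighborhoods is literally identical. So if $x_\infty$ escapes all the neighborhoods erased in the new game, it also escapes all those erased in the simulated original game, and $\sigma$ being winning forces $x_\infty\in S$. This shows $S$ is $(\tilde\alpha,\tilde\beta,\tilde c,\tilde\rho,\tilde{\mathcal{H}})$-winning.
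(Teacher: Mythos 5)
Your proof is correct and follows the same strategy-transfer argument as the paper: Alice replays her winning strategy from the easier game, and the only nontrivial step is the budget inequality $\sum_i\rho_{i,m}^{\tilde c}\le(\tilde\alpha\rho_m)^{\tilde c}$, which the paper derives from the $\ell^p$-norm monotonicity $\left(\sum_i \alpha_i^{\tilde c}\right)^{1/\tilde c}\le\left(\sum_i\alpha_i^c\right)^{1/c}$ for $c\le\tilde c$. Your calculation factoring out $\rho_{i,m}^{\tilde c-c}$ is essentially a direct proof of that same inequality, so the two arguments are the same in substance, with yours spelling out a bit more detail (including the $c=0$ edge cases).
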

This holds because  \[\Big( \sum_i \alpha_i^{\tilde{c}}\Big)^{1/\tilde{c}} \leq \Big( \sum_i \alpha_i^{c}\Big)^{1/c} \text{ when } c\leq \tilde{c},\] so Alice can answer in the $(\tilde{\alpha} , \tilde{\beta}, \tilde{c}, \tilde{\rho}, \tilde{\mathcal{H}})$-game using her strategy to answer from the $(\alpha , \beta, c, \rho, \mathcal{H})$-game.

\begin{lemma}[Invariance under similarities]\label{Invariance under similarities}
Let $f:\R^d \to \R^d$ be a  similarity with contraction ratio $\lambda$. Then a set $S$ is $(\alpha , \beta, c, \rho, \mathcal{H})$-winning if and only if the set $f(S)$ is $(\alpha , \beta, c, \lambda \rho, f(\mathcal{H}))$-winning.
\end{lemma}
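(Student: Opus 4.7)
The plan is to transfer strategies between the two games via $f$ and $f^{-1}$, relying on the two elementary facts that a similarity with contraction ratio $\lambda$ sends balls $B[x,r]$ to balls $B[f(x),\lambda r]$ and neighbourhoods $N(L,r)$ to neighbourhoods $N(f(L),\lambda r)$. The second fact holds because for any similarity, $\dist(f(x),f(L))=\lambda\,\dist(x,L)$, so $f^{-1}$ gives a bijection between plays of the two games.

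First, I would set up the correspondence between plays. Given a play of the $(\alpha,\beta,c,\rho,\mathcal{H})$-game, apply $f$ to each move: Bob's ball $B_m=B[x_m,\rho_m]$ becomes $B[f(x_m),\lambda\rho_m]$, and each set $N(L_{i,m},\rho_{i,m})$ in Alice's collection becomes $N(f(L_{i,m}),\lambda\rho_{i,m})$ with $f(L_{i,m})\in f(\mathcal{H})$.

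Next, I would check that the rules of one game hold if and only if those of the other do. The initial radius constraint $\rho_0\geq\rho$ becomes $\lambda\rho_0\geq\lambda\rho$; the recursive constraints $\rho_m\geq\beta\rho_{m-1}$ and $B_m\subseteq B_{m-1}$ are invariant under the scaling (using that $f$ is a homeomorphism); the budget $\sum_i\rho_{i,m}^c\leq(\alpha\rho_m)^c$ becomes $\sum_i(\lambda\rho_{i,m})^c\leq(\alpha\cdot\lambda\rho_m)^c$ after multiplying by $\lambda^c$, and similarly when $c=0$ the single-set constraint $\rho_{1,m}\leq\alpha\rho_m$ scales correctly; finally $\lim\rho_m=0$ if and only if $\lim\lambda\rho_m=0$.

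To conclude, I would note that by continuity of $f$ the outcome satisfies $\bigcap_m f(B_m)=\{f(x_\infty)\}$, and $x_\infty\notin\bigcup_{m,i}N(L_{i,m},\rho_{i,m})$ if and only if $f(x_\infty)\notin\bigcup_{m,i}N(f(L_{i,m}),\lambda\rho_{i,m})$, while $x_\infty\in S$ if and only if $f(x_\infty)\in f(S)$. Therefore a winning strategy for Alice in the $(\alpha,\beta,c,\rho,\mathcal{H})$-game with target $S$ yields, by conjugating through $f$, a winning strategy in the $(\alpha,\beta,c,\lambda\rho,f(\mathcal{H}))$-game with target $f(S)$, and the same argument applied to $f^{-1}$ (which is a similarity of ratio $\lambda^{-1}$) gives the reverse implication. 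There is no serious obstacle; the only minor point requiring attention is verifying the scaling of $c$-sums and of neighbourhoods, both of which are immediate from the definition of a similarity.
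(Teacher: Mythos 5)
Your proposal is correct and is exactly the paper's argument, which simply states ``This follows by mapping Alice's strategy by $f$''; you have written out the routine verifications (balls to balls, neighbourhoods to neighbourhoods, scaling of the radius constraints and of the $c$-sum budget, and the correspondence of outcomes) that the paper leaves implicit.
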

This follows by mapping Alice's strategy by $f$.

\subsection{Relationship between thickness and winning sets}

\begin{definition}
Given $M\in\mathbb{N}$, we define $\mathcal{H}_M$ to be the family of all sets that are a union of at most $M$  $(d-1)$-spheres.
\end{definition}

We now establish the key property that relates winning sets to thickness.

\begin{proposition}\label{prop:compact winning}
Let $C\subset \R^d$ be a compact set with an associated system of balls $\{S_I\}_I$ where
\begin{enumerate}[\rm(i)]
\item each ball has at most $N_0$ children,
\item for each $n \geq 1$ the sets $S_I$ with $|I|=n$ are disjoint and have the same radius $r_n$.
\item $\sup_{n \geq 0} \frac{r_{n+1}}{r_n}<1$
\end{enumerate}

Then there is $M$, depending on $N_0$, the choice of the norm, and $d$, such that the following holds. If $\tau:=\tau(C, \{S_I\}_I )>0$, then $S:=C\cup (\R^d\setminus S_{\emptyset})$ is $\big(\frac{1}{\tau }, \beta, 0, \beta \rad (S_{\emptyset}), \mathcal{H}_M\big)$-winning for every $\beta \in [\sup_{n \geq 0} \frac{r_{n+1}}{r_n},1)$.
\end{proposition}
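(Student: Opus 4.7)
The plan is for Alice to systematically erase the "gaps" $G_n^I:=S_I\setminus\bigcup_j S_{Ij}$ of the construction, timing each erasure for the moment Bob's ball has just contracted to the scale of the relevant chunks. A key geometric input I would establish first is the inclusion
\[
G_n^I\;\subseteq\;N\!\Bigl(\partial S_I\cup\bigcup_j\partial S_{Ij},\ h_I\Bigr).
\]
This follows from a short triangle-inequality argument: for $x\in G_n^I$ take $y\in C$ with $\dist(x,y)\le h_I$; if $y\in S_I$ then $y\in C\cap S_I\subseteq\bigcup_j S_{Ij}$ (by the disjointness of siblings at level $n$ together with the tree structure), so the segment $[x,y]$ crosses $\partial\bigcup_j S_{Ij}$; otherwise $y\notin S_I$ and $[x,y]$ crosses $\partial S_I$. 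Including $\partial S_I$ alongside the child boundaries is what allows the tight radius $h_I$ rather than $2h_I$.

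For each $n\ge 0$ set $m(n):=\min\{m:\rho_m\le r_n\}$, which is finite because $\rho_m\to 0$. Alice's strategy is to erase at turn $m$ the set
\[
N\!\Bigl(\bigcup_{n:\,m(n)=m}\;\bigcup_{\substack{|I|=n\\ S_I\cap B_m\ne\emptyset}}\bigl(\partial S_I\cup\bigcup_j\partial S_{Ij}\bigr),\ \rho_m/\tau\Bigr),
\]
and to pass at turns where no level $n$ satisfies $m(n)=m$. The radius $\rho_m/\tau=\alpha\rho_m$ is within the budget allowed by the game.

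Three checks make this work. First, coverage: whenever $m(n)=m$ we have $\rho_m\ge r_{n+1}$, which for $m>0$ comes from $\rho_m\ge\beta\rho_{m-1}>\beta r_n\ge r_{n+1}$, and for $m=0$ from $\rho_0\ge\beta r_0\ge r_1\ge r_{n+1}$; combined with $\tau h_I\le r_{n+1}$ this gives $\rho_m/\tau\ge h_I$, so by the geometric fact above each targeted gap is erased. Second, sphere count: the same chain shows that for $m>0$ at most one level can have $m(n)=m$; at $m=0$ at most two can (namely $n=0$ and, in the borderline case $\rho_0=r_1$, also $n=1$), because $\rho_0\ge\beta r_0>r_2$ forces $m(n)>0$ for $n\ge 2$. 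For each targeted level $n$, $B_m$ has radius $\le r_n$ and so meets at most $N_1$ of the disjoint level-$n$ chunks (with $N_1$ depending only on $d$ and the norm), each contributing $1+k_I\le 1+N_0$ spheres; thus the total number of spheres is bounded by $M:=2N_1(1+N_0)$.

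Finally, winning: assume $x_\infty$ avoids every erased set. If $x_\infty\notin S_\emptyset$ we are done, so suppose $x_\infty\in S_\emptyset$. By induction on $n$ I would show $x_\infty\in S_{I_n}$ for some $|I_n|=n$; the base is $x_\infty\in S_\emptyset$, and for the step, $x_\infty\in B_{m(n)}\cap S_{I_n}$ ensures that $S_{I_n}$ contributes to the erasure at turn $m(n)$, so $G_n^{I_n}$ is covered and $x_\infty\in S_{I_n}\setminus G_n^{I_n}$ forces $x_\infty$ into a child. Taking the intersection gives $x_\infty\in C\subseteq S$. The main obstacle I anticipate is the tight sandwich estimate $\rho_{m(n)}\in(r_{n+1},r_n]$ that ties the coverage and counting bounds together—this is exactly where the hypothesis $\beta\ge\sup_n r_{n+1}/r_n$ is used—together with the small edge-case analysis at turn $0$, which is what produces the factor of $2$ in $M$.
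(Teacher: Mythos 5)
Your proof is correct and takes essentially the same approach as the paper: Alice erases an $h_I$-neighbourhood of a bounded union of spheres covering the gap $S_I\setminus\bigcup_j S_{Ij}$, timed to the turn when Bob's radius first drops to the relevant scale, and the thickness bound $\tau h_I \le r_{n+1}$ together with a packing estimate keeps the move legal and the sphere count bounded. The only cosmetic differences are your choice of spheres (the actual boundaries $\partial S_I$, $\partial S_{Ij}$ rather than the paper's recentred spheres $\partial B(c_I, r_I - h_I/2)$ and $\partial B(c_{Ii}, r_{Ii}+h_I)$, both of which give the needed $h_I$-covering) and your bookkeeping via $m(n)=\min\{m:\rho_m\le r_n\}$, which picks up a harmless factor of $2$ in $M$ at turn $0$ that the paper avoids by using the disjoint ranges $R_n=[r_{n+1},r_n)$.
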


\begin{proof}
Observe that, by the assumption that  balls of the same level are non-overlapping and of the same size, there is a number $\kappa$ (depending only on the norm and the ambient dimension $d$) so that for every ball $B$ with $\rad (B)\leq \rad (S_I)$, $B$ intersects at most $\kappa$ sets $S_I$. Set $M:=(N_0+1)\kappa$.

First, we are going to be more specific about the sets that Alice will use to respond to Bob's moves. Let $S_I = B[c_I, r_I]$. We define the sets
\[
H_I:= \partial B(c_I, r_I-\frac{h_I}{2}) \cup \bigcup_i \partial B(c_{Ii}, r_{Ii}+h_I).
\]
(Note that the number of $(d-1)$-spheres forming $H_I$ is at most $N_0 +1$.). The intuitive idea is that using neighbourhoods of the sets we just defined, Alice is able to erase the complement of the children of $S_I$ that intersect Bob's move. By the definition of $h_I$,
\begin{equation} \label{eq:S-I-cover}
S_I \setminus \bigcup_i S_{Ii} \subseteq N(H_I, h_I).
\end{equation}
The reason we also delete the neighbourhood of $\partial B(c_I, r_I-\frac{h_I}{2})$ is that a priori the distance from $x\in S_I$ to $C$ could be realized at a point outside of $S_I$, but this is not possible if $x\in S\setminus N(\partial B(c_I, r_I-\frac{h_I}{2}),h_I)$.

Now, we define the kind of sets Alice may choose to erase:
\[
A:= \bigcup_{I \in \mathcal{I}} N(H_I, \max_{I \in \mathcal{I}}h_I),
\]
where $n \geq 0$ and $\mathcal{I}$ is any set of at most $\kappa$ words of length $n$. Observe that the set $A$ is the $(\max_{I \in \mathcal{I}}h_I)$-neighborhood of $\bigcup_{I \in \mathcal{I}} H_I$, which is a union of at most $M=\kappa (N_0+1)$ spheres.

Let $(B_m)_{m=0}^\infty$ be the sequence of Bob's moves. Note that since $S^C \subseteq S_{\emptyset}$, we can assume that $\rad(B_0)\leq r_0$ (Alice passes her turn until Bob plays a ball with radius $\leq r_0$). By the rules of the game and the assumption and $\beta \in [\sup_{n \geq 0} \frac{r_{n+1}}{r_n},1)$, we have that:
\begin{equation}\label{eq:radiusr1}
r_0 \geq \rad (B_0) \geq \rho:= \beta \rad (S_{\emptyset})=\beta r_0 \geq r_1,
\end{equation}
and
\begin{equation}\label{eq:AtLeastOneBall}
\forall n \in \N_0 \text{ there is at least one ball $B_m$ with radius } \rad(B_m)\in R_n,
\end{equation}
where $R_0:=[r_1,r_0]$ and $R_n:=[r_{n+1},r_{n})$ for $n \geq 1$.

Now, we can give a strategy for Alice: Given a move $B$ by Bob, how does Alice respond?
In case $B$ is the first ball played by Bob with radius in $R_{n}$ (for some $n \geq 0$), Alice erases $A(B):=\bigcup_{I \in \mathcal{I}} N(H_I, \max_{I \in \mathcal{I}}h_I)$, where $\mathcal{I}=\mathcal{I}(B)$ is the set at most $\kappa$ words $I$ of length $n$ so that $B \cap S_I \neq \emptyset$, if it's a legal move. In any other case Alice does not erase anything (she passes her turn).

To show that this strategy is winning, suppose that $x_{\infty} \notin \bigcup_m A_m$. Let's assume that $x_{\infty} \notin S$ for the sake of contradiction. Then $x_{\infty}\in S_{\emptyset}\setminus C$, so there exists a unique $n \geq 0$ and a word $I$  of length $n$ so that $x_{\infty} \in S_I \setminus \bigcup_i S_{Ii}$. Then we know from \eqref{eq:S-I-cover} that $x_{\infty} \in N(H_I, h_I)$. We will see that Alice erases $N(H_I, h_I)$ as a response to one of Bob's moves, contradicting that $x_\infty\notin A_m$ for all $m$.

By the assumption $\tau:=\tau(C, \{S_I\}_I )>0$, we have $r_n>0$. We also know by the rules of the potential game that $\rad (B_m) \to_{m \to \infty} 0$. Using \eqref{eq:AtLeastOneBall}, we see that for each $n\geq 0$ there is a smallest $m=m(n)$ such that $\rad (B_{m}) \in R_n$.

If $m=0$, we get from \eqref{eq:radiusr1} that $n=0$ and (using the definition of thickness) $r_{0} \geq \rad (B_0) \geq r_{1} \ge \tau h_{\emptyset}$.

If $m \geq 1$ then, by the rules of the game and the definition of thickness,
\[
\rad(B_{m}) \geq \beta \rad (B_{m -1})> \beta r_{n} \geq r_{n+1}\geq   \tau \max_{|I|=n} h_I.
\]

In any case, we proved that
\begin{equation}\label{eq:winning1}
\frac{1}{\tau} \rad (B_{m}) \geq \max_{I \in \mathcal{I}(B_{m})} h_I.
\end{equation}
This says that in the step $m$ Alice erases $A(B_{m}) \supseteq N(H_I, h_I)$, since it is a legal move. As explained above, this is a contradiction, finishing the proof.

\end{proof}

\subsection{Application: Hausdorff dimension of the intersections of thick compact sets}

As in \cite{Y21, FalconerYavicoli}, combining Proposition \ref{prop:compact winning} with the results from \cite[Section 5]{BFS}, we are able to deduce that collections of many thick sets that are located close to each other have nonempty intersection (and the intersection even has positive Hausdorff dimension). Note that the Gap Lemma guarantees that two thick sets intersect, but even for Newhouse thickness it is challenging to find checkable conditions ensuring that three or more thick sets intersect.

\begin{definition}
Given $\eta >0$ and a collection of closed sets $\mathcal{H}$ in $(\R^d, \dist)$, the measure $\mu$ is called absolutely $(\eta, \mathcal{H})$-decaying if there is $L\ge 1$ such that for every sufficiently small ball $B(x, \rho)$ centered in the topological support of $\mu$, for every $H \in \mathcal{H}$, and for every $\varepsilon >0$, we have
\[
\mu (B(x,\rho) \cap  N(H, \varepsilon \rho)) \le L\, \varepsilon^\eta \mu (B(x, \rho)).
\]
\end{definition}

Note that for $M$ fixed, the Lebesgue measure $\mathcal{L}^d$ is absolutely $(1,\mathcal{H}_M)$-decaying in $(\R^d, \dist)$. Hence, applying \cite[Theorem 5.5]{BFS} with $X=J=(\R^d,\dist)$, $\mu=\mathcal{L}^d$, $\eta=1$, $\delta=d$ and $\mathcal{H}=\mathcal{H}_M$, we get:
\begin{theorem}\label{Theo:BFS}
Let $S \subseteq \R^d$ be an $(\alpha, \beta, c, \rho, \mathcal{H}_M)$-potential winning set, with $c \in (0,1)$ and $\beta \in (0, \frac{1}{4}]$. Then, for every ball $B$ of radius at least $\rho$, we have
\[
\dim_{H}(S \cap B) \geq d -K_1 \frac{\alpha}{|\log (\beta)|}>0 \text{ if } \alpha^c \leq \frac{1}{K_2}(1-\beta^{1-c}),
\]
where $K_1$ and $K_2$ are large constants independent of $\alpha$, $\beta$, $c$, $\rho$ (but possibly depending on $d$, $\dist$, $M$).
\end{theorem}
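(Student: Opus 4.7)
The plan is to read Theorem \ref{Theo:BFS} as a direct specialization of \cite[Theorem 5.5]{BFS} to the case where the ambient metric space is $(\R^d,\dist)$, the reference measure is $\mathcal{L}^d$, the dimension parameter is $\delta = d$, the decay exponent is $\eta = 1$, and the family of thin sets is $\mathcal{H}_M$. Under this identification, both the lower bound $\dim_H(S \cap B) \ge d - K_1 \alpha/|\log\beta|$ and the admissibility condition $\alpha^c \le K_2^{-1}(1-\beta^{1-c})$ are exactly what \cite[Theorem 5.5]{BFS} outputs in this setting, with $K_1, K_2$ coming from the internal constants there and depending on $d$, $\dist$, and the decay constant $L$ of $\mathcal{L}^d$ against $\mathcal{H}_M$. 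So the proof reduces to one hypothesis check plus one invocation of the cited result.

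The only non-trivial verification is that $\mathcal{L}^d$ is absolutely $(1,\mathcal{H}_M)$-decaying on $(\R^d,\dist)$ with a constant depending only on $M$, $d$, and $\dist$. Concretely, for a ball $B(x,\rho)$ and a single sphere $\partial B(y,R)$, I would establish
\[
\mathcal{L}^d\bigl(B(x,\rho) \cap N(\partial B(y,R), \varepsilon\rho)\bigr) \le C_{d,\dist}\, \varepsilon\, \rho^d
\]
by splitting into the cases $R \le \rho$ (the whole sphere has $(d-1)$-dimensional surface area bounded by a constant multiple of $\rho^{d-1}$) and $R > \rho$ (the sphere restricted to $B(x,\rho)$ is close to a piece of a hyperplane, again of surface area a constant times $\rho^{d-1}$); in either case the $\varepsilon\rho$-tubular neighborhood has volume bounded by a constant times $\varepsilon\rho^d$. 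A union bound over the at most $M$ spheres forming a set $H \in \mathcal{H}_M$, combined with $\mathcal{L}^d(B(x,\rho)) = c_{d,\dist}\rho^d$, yields absolute $(1,\mathcal{H}_M)$-decay with constant $L$ proportional to $M$. The centering condition in the definition of decay is vacuous since $\supp(\mathcal{L}^d) = \R^d$.

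With the decay property established, the remainder is pure bookkeeping: substitute $X=J=(\R^d,\dist)$, $\mu=\mathcal{L}^d$, $\eta=1$, $\delta=d$, $\mathcal{H}=\mathcal{H}_M$ into \cite[Theorem 5.5]{BFS}. No deep obstacle is expected; the only point requiring care is to ensure that the constants $K_1, K_2$ depend only on the fixed data $M, d, \dist$ (but not on $\alpha, \beta, c, \rho$), so that the resulting statement is uniform over the admissible range of those parameters. The restriction $\beta \in (0,1/4]$ is inherited from the hypotheses of \cite[Theorem 5.5]{BFS}, and $c \in (0,1)$ is what gives meaning to the exponent $1-c$ in the admissibility condition.
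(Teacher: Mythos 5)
Your proposal matches the paper's argument: the paper likewise notes that $\mathcal{L}^d$ is absolutely $(1,\mathcal{H}_M)$-decaying on $(\R^d,\dist)$ and then cites \cite[Theorem 5.5]{BFS} with exactly the substitutions $X=J=(\R^d,\dist)$, $\mu=\mathcal{L}^d$, $\eta=1$, $\delta=d$, $\mathcal{H}=\mathcal{H}_M$. Your sketch of the decay verification by bounding the $\varepsilon\rho$-neighborhood of each sphere and summing over the at most $M$ spheres is a reasonable expansion of what the paper leaves as a remark.
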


From now on, $K_1$ and $K_2$ will be the constants given by the previous Theorem.

\begin{corollary}\label{Theo:Intersections}
Let $(C_i)_i \subseteq (\R^d, \dist)$ be a family of countably many compact sets as in Proposition \ref{prop:compact winning} for the same $(\R^d, \dist)$, $N_0$ and $\kappa$, so that
\begin{enumerate}[\rm(i)]
\item\label{H1} $R:=\sup_i \rad (S^i_{\emptyset})< \infty$,
\item\label{H2} there is a ball $B \subseteq \bigcap_i S_{\emptyset}^i$,
\item\label{H3} $\sup_{i,n} \frac{r^i_{n+1}}{r^i_n} \leq \beta_0:= \min\{ \frac{1}{4}, \frac{\rad(B)}{R}\}$,
\item\label{H4} there is $c_0 \in (0,1)$ so that $\sum_i \tau_i^{-c_0} \leq \frac{1}{K_2}(1- \beta_0^{1-c_0})$.
\end{enumerate}
Then,
\[
\dim_H \left(\bigcap_i C_i \right)\geq d -K_1 \frac{\left( \sum_i \tau_i^{-c_0}\right)^{\frac{1}{c_0}}}{\beta_0 |\log (\beta_0)|} >0.
\]
\end{corollary}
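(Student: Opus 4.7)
The plan is to convert each compact set $C_i$ into a winning set for the potential game via Proposition \ref{prop:compact winning}, combine these winning sets via the countable intersection property (Lemma \ref{Countable intersection property}), and finally apply the quantitative dimension bound of Theorem \ref{Theo:BFS} inside a ball contained in $\bigcap_i S_\emptyset^i$.

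First, I would fix $\beta = \beta_0$ and verify that Proposition \ref{prop:compact winning} applies to each $C_i$: the assumptions on the systems of balls match those of the proposition, and by hypothesis \eqref{H3} together with $\beta_0 \le 1/4 < 1$ we have $\beta_0 \in \big[\sup_n r_{n+1}^i/r_n^i, 1\big)$. Hence $S_i := C_i \cup (\R^d \setminus S_\emptyset^i)$ is $\big(\tau_i^{-1}, \beta_0, 0, \beta_0 \rad(S_\emptyset^i), \mathcal{H}_M\big)$-winning. Applying monotonicity (Lemma \ref{Monotonicity}) with $\tilde c = c_0 \in (0,1)$ and $\tilde \rho = \beta_0 R \ge \beta_0 \rad(S_\emptyset^i)$ (using \eqref{H1}), each $S_i$ is in fact $\big(\tau_i^{-1}, \beta_0, c_0, \beta_0 R, \mathcal{H}_M\big)$-winning, now with a uniform starting radius.

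Next, Lemma \ref{Countable intersection property} with exponent $c_0$ gives that $S := \bigcap_i S_i$ is $\big(\alpha, \beta_0, c_0, \beta_0 R, \mathcal{H}_M\big)$-winning, where $\alpha := \big(\sum_i \tau_i^{-c_0}\big)^{1/c_0}$. Hypothesis \eqref{H4} is exactly $\alpha^{c_0} \le \frac{1}{K_2}\big(1-\beta_0^{1-c_0}\big)$, so Theorem \ref{Theo:BFS} applies and produces a dimension estimate of the form $\dim_H(S \cap B') \ge d - K_1 \alpha / |\log \beta_0|$ for every ball $B'$ of radius at least $\beta_0 R$. Taking $B'$ to be the ball $B$ from hypothesis \eqref{H2}, which satisfies $\rad(B) \ge \beta_0 R$ by \eqref{H3}, and observing that $B \subseteq \bigcap_i S_\emptyset^i$ forces $S_i \cap B = C_i \cap B$ for every $i$, we obtain $S \cap B = \bigcap_i C_i \cap B \subseteq \bigcap_i C_i$, which yields the claimed lower bound.

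There is no serious obstacle beyond parameter bookkeeping. The minor points requiring attention are: (a) absorbing the different initial radii $\beta_0 \rad(S_\emptyset^i)$ into the single threshold $\beta_0 R$ via monotonicity; (b) raising the game exponent from $c=0$ (as produced by Proposition \ref{prop:compact winning}) to $c_0 \in (0,1)$ so that Theorem \ref{Theo:BFS} is applicable; and (c) checking that \eqref{H4} is the precise smallness condition needed to place Theorem \ref{Theo:BFS} in its nontrivial regime. Once these are in hand, the chain Proposition \ref{prop:compact winning} $\to$ monotonicity $\to$ countable intersection $\to$ Theorem \ref{Theo:BFS} delivers the conclusion directly, with the positivity $d - K_1\alpha/|\log\beta_0| > 0$ guaranteed by the same smallness assumption \eqref{H4}.
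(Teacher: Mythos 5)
Your proof is correct and follows essentially the same route as the paper: Proposition \ref{prop:compact winning} to get each $S_i$ winning with exponent $c=0$, monotonicity to raise the exponent to $c_0$ and unify the starting radius to $\beta_0 R$, the countable intersection property to get $S=\bigcap_i S_i$ winning with parameter $\alpha=\big(\sum_i \tau_i^{-c_0}\big)^{1/c_0}$, Theorem \ref{Theo:BFS} for the dimension estimate, and finally the localization $S\cap B=\bigcap_i C_i\cap B$. One tiny point worth making explicit: Theorem \ref{Theo:BFS} gives the bound $d-K_1\alpha/|\log\beta_0|$, which is stronger than (and hence implies) the corollary's stated bound $d-K_1\alpha/(\beta_0|\log\beta_0|)$ since $\beta_0\le\tfrac14<1$; spelling out this comparison would make the phrase ``which yields the claimed lower bound'' fully airtight.
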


\begin{proof}
Let $s=\sup_{i, n} \frac{r^i_{n+1}}{r^i_n}$. It follows from Proposition \ref{prop:compact winning} and Lemma \ref{Monotonicity} that $S_i:=C_i \cup (\R^d\setminus S_{\emptyset}^i)$ is $\big(\frac{1}{\tau_i}, \beta, c, \beta R, \mathcal{H}_M\big)$-winning for every $c > 0$ and $\beta \in [s,\frac{1}{4}]$, where $M$ is independent of $i$.

Note that $\beta_0\in [s, \frac{1}{4}]$ by \eqref{H3}. By Lemma \ref{Countable intersection property}, $S:=\bigcap_i S_i$ is $\big(\alpha, \beta_0, c_0, \beta_0 R, \mathcal{H}_M\big)$-winning
for $\alpha= \left( \sum_i \tau_i^{-c_0}\right)^\frac{1}{c_0}$ where, by \eqref{H4},
\[
\alpha^{c_0}= \sum_i \tau_i^{-c_0}\leq \frac{1}{K_2}(1- \beta_0^{1-c_0}).
\]
Then, since $\rad (B) \geq \beta_0 R$, we conclude from Theorem \ref{Theo:BFS} that
\[
\dim_{H}(S \cap B)\geq d -K_1 \frac{\left( \sum_i \tau_i^{-c_0}\right)^{\frac{1}{c_0}}}{\beta_0 |\log (\beta_0)|}>0.
\]
Since, by \eqref{H2}, $S_i \cap B = C_i \cap B$ for every $i$, we see that $S \cap B \subseteq \bigcap_i C_i$, and the conclusion follows.
\end{proof}

\subsection{Application: patterns in thick compact sets}

As a final application, we deduce:
\begin{theorem} \label{thm:patterns}
Let $C \subseteq \R^d$ be a compact set as in Proposition \ref{prop:compact winning} with thickness $\tau$, where $\sup_n \frac{r_{n+1}}{r_n}\leq \frac{1}{4}$.
Then, $C$ contains a homothetic copy of every set with at most
\[
N(\tau):=\left \lfloor \frac{3}{4 e K_2} \frac{\tau}{\log \tau} \right \rfloor
\]
elements. Moreover, if $A$ is such a set, then for all $\lambda \in (0, \frac{3\rad (S_\emptyset)}{4 \diam (A)})$, there exists a set $X$ of positive Hausdorff dimension (depending on $A$, $C$, $S_\emptyset$ and $\lambda$) such that
\[
x+ \lambda A \subseteq C \text{ for all } x \in X.
\]
\end{theorem}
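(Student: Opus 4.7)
My plan is to realize the set $X$ of admissible translations as the intersection of $k$ translates of $C$ and invoke Corollary \ref{Theo:Intersections}. Given $A=\{a_1,\ldots,a_k\}\subseteq\R^d$ with $k\le N(\tau)$ and $\lambda\in(0,\,3\rad(S_\emptyset)/(4\diam A))$, I set $C_i:=C-\lambda a_i$ and observe that $x+\lambda A\subseteq C$ is equivalent to $x\in\bigcap_{i=1}^k C_i$. Each $C_i$ carries the translated system of balls $\{S_I-\lambda a_i\}_I$, which has the same thickness $\tau$, the same bound $N_0$ on the number of children, the same level-wise disjointness and common radius $r_n$, and the same radius ratios as $\{S_I\}_I$. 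Hence every $C_i$ satisfies the hypotheses of Proposition \ref{prop:compact winning} with parameters identical to those of $C$.

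I then verify the four hypotheses of Corollary \ref{Theo:Intersections}. Setting $R:=\rad(S_\emptyset)$, hypothesis \eqref{H1} is immediate since each $S_\emptyset^i=S_\emptyset-\lambda a_i$ has radius $R$. For \eqref{H2}, writing $S_\emptyset=B[z,R]$ and fixing any $a^*\in A$, the triangle inequality shows that $B:=B[z-\lambda a^*,\, R-\lambda\diam(A)]$ is contained in every $S_\emptyset-\lambda a_i$. The assumption on $\lambda$ forces $\rad(B)>R/4$, so the $\beta_0$ of the corollary equals $\min\{1/4,\rad(B)/R\}=1/4$, and \eqref{H3} follows directly from the hypothesis $\sup_n r_{n+1}/r_n\le 1/4$.

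The heart of the argument is hypothesis \eqref{H4}: since every $\tau_i=\tau$, it reads $k\tau^{-c_0}\le K_2^{-1}(1-(1/4)^{1-c_0})$ for some $c_0\in(0,1)$. I choose $c_0:=1-1/\log\tau$ (for $\tau\le e$ the interesting content reduces to the case $k=1$, handled by applying Theorem \ref{Theo:BFS} to $C$ alone to get $\dim_H C>0$). This choice maximizes $\tau^{c_0}(1-c_0)=\tau/(e\log\tau)$. Convexity of $t\mapsto 4^t$ on $[0,1]$ yields $4^{c_0}\le 1+3c_0$, and consequently
\begin{equation*}
1-(1/4)^{1-c_0}\;=\;1-\tfrac{1}{4}\cdot 4^{c_0}\;\ge\;\tfrac{3}{4}(1-c_0).
\end{equation*}
Substituting, hypothesis \eqref{H4} is satisfied whenever $k\le \tfrac{3}{4K_2}\tau^{c_0}(1-c_0)=\tfrac{3\tau}{4eK_2\log\tau}$, that is, whenever $k\le N(\tau)$. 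Corollary \ref{Theo:Intersections} then gives $\dim_H\bigl(\bigcap_{i=1}^k C_i\bigr)>0$, and we take $X:=\bigcap_{i=1}^k C_i$ to conclude.

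The main obstacle is isolating the correct value $c_0=1-1/\log\tau$ together with the elementary convexity bound $4^{c_0}\le 1+3c_0$, since it is precisely these two choices that conspire to yield the explicit constant $3/(4eK_2)$ in $N(\tau)$. Beyond this optimization, the argument is a direct application of the winning-set machinery developed in Section \ref{sec:winning-and-patterns}.
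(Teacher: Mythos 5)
Your proof is correct and takes essentially the same route as the paper: translate $C$ by the elements of $\lambda A$, feed the resulting family into Corollary \ref{Theo:Intersections}, choose $c_0 = 1 - 1/\log\tau$, and simplify. The only cosmetic difference is that you justify the final bound via the convexity estimate $4^{c_0}\le 1+3c_0$, whereas the paper observes that $(\log\tau)(1-4^{-1/\log\tau})$ is increasing in $\tau$ and evaluates it at $\tau=e$; both yield the same constant $3/(4eK_2)$.
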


We make some remarks on this theorem. It is a well known consequence of the Lebesgue density theorem that sets of positive Lebesgue measure contain homothetic copies of every finite set. Theorem \ref{thm:patterns} shows that thick sets of measure zero also contain homothetic copies of finite sets, up to certain size depending on the thickness. The case $d=1$ was established in \cite{Y21}. This was extended to higher dimensions in \cite{FalconerYavicoli} for the different notion of thickness considered there. In that paper, the allowed size of the set is $\sim \tfrac{\tau^d}{\log\tau}$. Roughly speaking, this is because in the potential game the set $\mathcal{H}$ considered in \cite{FalconerTechniques} consists of points while here we have to take unions of spheres (the exponent of $\tau$ is the co-dimension of the sets in $\mathcal{H}$). On the other hand, Theorem \ref{thm:patterns} shows that some totally disconnected sets in arbitrary dimension contain large homothetic patterns. The value of $K_2$ is effective in principle (but small).

Theorem \ref{thm:patterns} fits into a large and expanding literature about the presence (or absence) of geometric patterns in fractal sets. See \cite{FGP, KOS, LiangPramanik} and references there for some (different) recent results in this area.

\begin{proof}[Proof of Theorem \ref{thm:patterns}]
Given a finite set $A$ and $\lambda \in (0, \frac{3\rad (S_\emptyset)}{4 \diam (A)})$, we seek translates of $\lambda A:=\{b_1, \cdots, b_N\}$ inside $C$. We may assume $b_1=0$, and so $\lambda A \subseteq B(0,\frac{3\rad (S_\emptyset)}{4})$.

We define $C_i:=C-b_i$, which is a compact set with thickness $\tau$ for every $i$ (for the translated systems of balls).
There is a ball $B \subseteq \bigcap_{1\leq i \leq N} (S_\emptyset-b_i) \subseteq S_\emptyset$ of radius $(1-\frac{3}{4})\rad(S_\emptyset)=\frac{1}{4}\rad(S_\emptyset)$.

Let $c= 1- 1/\log(\tau)$. By Corollary \ref{Theo:Intersections}, if 
\[
N \leq \frac{1}{K_2}\tau^c (1-4^{c-1}) = \frac{1}{e K_2} \frac{\tau}{\log \tau} [(\log\tau) (1- 4^{-1/\log \tau})],
\]
then $\dim_H \left(B \cap \bigcap_i C_i \right) >0$. It can be checked that $(\log\tau) (1- 4^{-1/\log \tau})$ is increasing so (assuming $\tau\ge e$) this conclusion holds if
\[
N \leq  \frac{3}{4 e K_2} \frac{\tau}{\log \tau}.
\]

Finally, if $x \in X:=B \cap \bigcap_i C_i$, then $x+b_i \in C_i+b_i=C$ for every $1 \leq i \leq N$, hence $x+\lambda A=x+\{b_1, \cdots, b_N\} \subseteq C$ as required.
\end{proof}


\end{document}